\begin{document}

\begin{center}
\Large{\bf M\"untz linear transforms of Brownian motion}
\end{center}

\vspace{0.5cm}

\begin{center}
{\bf Larbi Alili$^{(1)}$} \hspace{5mm} {\bf Ching-Tang Wu$^{(2)}$}
\end{center}

\vspace{5mm}

\begin{center}
{\small {\sc ABSTRACT}} \\
\vspace{4mm}
\begin{minipage}{14cm}
We consider a class of Volterra linear  transforms of Brownian motion associated to a sequence of M\"untz Gaussian spaces and determine
explicitly their kernels; the kernels take a simple form when expressed in terms of M\"untz-Legendre polynomials. These are new explicit examples of progressive Gaussian enlargement of a Brownian filtration.  We  give
a necessary and sufficient condition for the existence of
kernels of infinite order associated to an infinite dimensional M\"untz Gaussian space; we also examine when the transformed Brownian motion remains a semimartingale in the filtration of the original process. This completes some partial answers obtained in (\cite{Erraoui-Ouknine-2008}, \cite{hhm-00},
\cite{hm-2004}) to the aforementioned problems in the infinite dimensional case.
\end{minipage}
\end{center}

\vspace{0.5cm}

\noindent{\bf Keywords and Phrases:}  Enlargement of filtration ; Gaussian process ; M\"untz polynomials ; noncanonical representation ; self-reproducing
kernel ; Volterra representation.
\vspace{0.5cm}
\noindent{\bf MSC 2010:} Primary 45D05 ; 60G15, Secondary 26C05 ; 46E22.
%\vspace{1cm}

\theoremstyle{plain}
   \newtheorem{Theorem}{Theorem}[section]
   \newtheorem{Corollary}{Corollary}[section]
   \newtheorem{Lemma}{Lemma}[section]
   \newtheorem{Proposition}{Proposition}[section]

\theoremstyle{definition}
   \newtheorem{Definition}{Definition}[section]
   \newtheorem{Example}{Example}[section]
   \newtheorem{Remark}{Remark}[section]
   \newtheorem{example}{Example}[section]

\def\real{\mathbb{R}}
\def\rational{\mathbb{N}}
\def\measure{\mathbb{P}}
\def\preal{{\mathbb{R}}^+}
\def\ii{{I\hspace{-0.5mm}I}}
\def\iii{{I\hspace{-0.5mm}I\hspace{-0.5mm}I}}
\def\sign{\mbox{\rm{sign}}}
\newcommand\spn{\mbox{\rm{span}}}
\def\dim{\mbox{\rm{dim}}}
\def\span{\mbox{\rm{span}}}
\def\dim{\mbox{\rm{dim}}}

%\tableofcontents

\section{Introduction}
There has been a renewed interest in M\"untz spaces which is particularly motivated by topics related to Markov inequalities and approximation theory, see  for example (\cite{BBR-04}--\cite{Borowein-Erdelyi-Zhang-94}) and the references therein. In the meanwhile, Volterra transforms with non square-integrable kernels, involving some functional spaces, provide interesting examples of noncanonical decompositions of the Brownian filtration. This motivated many studies on the topic, for instance see (\cite{Baudoin}, \cite{Chiu-1995}, \cite{Erraoui-Ouknine-2008}, \cite{Hibino-96},   \cite{Levy-56}). Our aim in this paper is to study the class of Volterra transforms involving Gaussian spaces which are generated from sequences of M\"untz polynomials. This gives new explicit examples of progressive enlargement of filtrations and interesting links with M\"untz-Legendre polynomials;  see  (\cite{jeulin80}--\cite{jeulin-yor93}, \cite{yor92}) for studies on this topic in more general frameworks.

To be more precise, let us fix our mathematical setting. Let $B:=(B_t, t\geq 0)$ be a standard Brownian motion defined on a
complete probability space $(\Omega, {\mathcal F}, \mathbb{P})$, and
denote by $\{\mathcal{ F}_t^B, t\geq 0\}$ the filtration it
generates. We encountered, in literature, two types of linear transforms of $B$ which are of our interest in this paper.  The first type consists of transforms of the form
\begin{equation}\label{Volterra-general}
T(B)_t=\int_0^t\! \rho(t/s)\, dB_s
\end{equation}
for all $t>0$ and some  $\rho\in \mathcal{M}$, with
\begin{equation*}
\mathcal{M}=\{\rho:[1,\infty)\rightarrow \mathbb{R}\; \hbox{measurable function s.t.}\; \int_0^1\! \rho^2(1/v)\, dv<\infty \}.
\end{equation*}
These transforms were intensively studied in \cite{jeulin-yor93}. In particular, we found in Proposition 15  therein a variant of Theorem 6.5 of \cite{Knight} which states that $(T(B)_t, t\geq 0)$ is a semimartingale relative to the filtration of $B$ if and only if there exists $c\in \mathbb{R} $ and $g\in \mathcal{M}$ such that
\[
\rho(.)=c+\int_1^{.}\! \frac{1}{y}g(y)\, dy.
\]
 The second type consists of Volterra transforms with non-square-integrable kernels which are of the form
  \begin{equation}\label{Volterra}
T(B)_t=B_t-\int_0^t\! ds\int_0^s\! l(s,v)\, dB_v
\end{equation}
for all $t>0$, where the kernel $l: \mathbb{R}_+^2 \rightarrow \mathbb{R}$, which satisfies $l(s,v)=0$ for $s<v$, is such that the symmetrized kernel
 \begin{eqnarray*}
 \tilde{l}(t,s):=
 \left\{
\begin{array}{lll}
    l(t,s) &\hbox{if}\quad s\leq t;\\
  l(s,t)  &\hbox{if}\quad s\geq t,
\end{array}
\right.
\end{eqnarray*}
 is continuous on $\mathbb{R}^2_+$. These transforms were studied for example in (\cite{alili-wu-01}, \cite{fwy99}, \cite{hhm}). Note that,  in the semimartingale case with $c=1$, the transform (\ref{Volterra-general}) becomes a Volterra transform of the form (\ref{Volterra}) with kernel $l(t,s)=t^{-1}g(t/s)$ for $s\leq t$ and $l(t,s)=0$ otherwise. Conversely, all transforms of the form (\ref{Volterra}) which we will consider in this paper are of the form (\ref{Volterra-general}). Uniqueness when defining  $T(B)$ by either (\ref{Volterra-general}) or (\ref{Volterra}) holds only up to a stochastic modification and we work with the continuous one.

Let $f_j(x):=x^{\lambda_j}$, $j=1,2,\cdots$, be a sequence of M\"untz polynomials where $\Lambda=\{\lambda_1, \lambda_2, \cdots \}$ is a sequence of reals satisfying
\begin{equation}\label{condition-1}
\lambda_j>-1/2, \qquad j=1,2,\cdots.
\end{equation}
These generalized polynomials are defined on $[0, \infty)$ and the value of $f_j$ at $x=0$ is the limit of $f_j(x)$ as $x\rightarrow 0$  from $(0, \infty)$ for $j=1,2, \cdots$.
 For each fixed  $t>0$, let us define the M\"untz Gaussian spaces
\begin{equation}\label{Muntz-g}
G_{t}(\lambda_1,\cdots \lambda_n; B)=\hbox{Span} \left\{ \int_0^t\! s^{\lambda_j} \, dB_s, j=1,2, \cdots n \right\}
\end{equation}
and
\begin{equation}\label{Muntz-g-infty}
G_{t}(\lambda_1, \lambda_2, \cdots; B)=\hbox{Span} \left\{ \int_0^t\! s^{\lambda_j} \, dB_s, j=1,2, \cdots  \right\}
\end{equation}
and let $H_t(B)$ be the closed linear span of $\{B_s, s\leq t \}$. A  M\"untz transform of order $n$  associated to $\lambda_1$, $\cdots$, $\lambda_n$, is a linear transform $T_n$ of the form (\ref{Volterra-general}) such that the following two properties hold true:
\begin{itemize}
\item[(i)] $(T_n(B)_t, t\geq 0)$ is a Brownian motion;
\item[(ii)] we have the orthogonal decomposition
\begin{equation}\label{orthogonal-Muntz}
H_t(B)=H_t(T_n(B))\oplus G_t(\lambda_1,\cdots, \lambda_n; B), \qquad t>0.
\end{equation}
\end{itemize}
Following \cite{alili-wu-01} and \cite{hhm}, if $n<\infty$ then such transforms exist. As we shall see, the transform $T_n$ of the form (\ref{Volterra}) with  $l(t,s)=k_n(t,s) :=t^{-1}K_n(s/t)$ for $s\leq t$ where $ K_n(x):= \sum_{j = 1}^n
a_{j,n} x^{\lambda_j}$ for $0\leq x\leq 1$ and  $a_{1,n},
a_{2,n},...,a_{n,n}$ are uniquely determined by the system
\begin{equation}\label{system-0}
\sum_{j = 1}^n \frac{a_{j,n}}{\lambda_j + \lambda_k + 1} = 1,\quad k
= 1, \cdots, n,
\end{equation}
is a M\"untz transform of order $n$. The latter system, when $\lambda_j=j$ for $j=1,2, \cdots$, was first discovered by P. L\'evy, see  (\cite{Levy-book}, \cite{Levy-57}) and was further studied  in \cite{Chiu-1995}. Solving (\ref{system-0}), we found that the sequence of M\"untz polynomials
$(K_n, n=1,2,\cdots)$ can be simply expressed in terms of M\"untz-Legendre polynomials which  allows to simplify the study of some of their properties. Note that $T_n$ takes the form (\ref{Volterra-general}) with
\begin{equation}\label{Form-kernel}
\rho_n(x)=1-\int_1^x\! K_n(1/r)\frac{dr}{r}, \quad x\geq 1.
\end{equation}
 Thus, we are in  the semimartingale case with $c=1$ and $g(.)=-K_n(1/.)$.

The kernels described above are homogeneous of degree $-1$ in the sense that
\[k_{n}(\alpha t,\alpha s)={\alpha}^{-1}k_n(t,s), \quad 0<s\leq t<\infty, \alpha > 0.
\]
As a consequence, the associated Volterra transforms have a close
connection to a class of stationary processes. That is, the process
\[
(e^{-u/2} T_n(B)_{e^u},
u\in \mathbb{R})
 \]
 is an Ornstein-Uhlenbeck process; the conventional value  $-1/2$ of the parameter will be dropped from our notations. It has the moving average representation, m.a.r.$\,$for short,
\[
S_n(W)_u:=\int_{-\infty}^{u}\!\eta_{n}(u-r) \,
dW_{r}
\]
for $u\in\mathbb{R}$, where $W$ is a Brownian motion indexed by $\mathbb{R}$ and
$\eta_{n}\in L^1(\mathbb{R}_+)\cap L^2(\mathbb{R}_+)$ has the Fourier transform
\begin{equation}\label{fip}
\hat{\eta}_n(\xi):=\int e^{i\xi x}\eta_n(x)\, dx=\frac{1}{1/2-i\xi}\prod_{j=1}^{n}
\frac{\xi-ip_j}{\xi+ip_j}, \qquad \xi \in \mathbb{R},
\end{equation}
where $p_j=\lambda_j+\frac{1}{2}$ for $j=1, 2, \cdots$. Applying then the characterization given in  \cite{Kara-50}, the presence of the inner part, given by the product in
 (\ref{fip}), implies that the latter m.a.r.$\,$is not
canonical with respect to $W$.

%By a Paley-Wiener theorem \cite{DM}$, \hat{\eta}_n$ lives in the Hardy class $H^{2+}$. Due to a characterization of $H^{2+}$, as %resulting from products of inner and outer functions,
%formula (\ref{fip}) gives a clear description of what happens when $n\rightarrow \infty$.

A natural question is to know whether there exists a transform $T$ of the form (\ref{Volterra-general}) such that
\[
H_t(B)=H_t(T(B))\oplus G_t(\lambda_1,\lambda_2,\cdots; B)
\]
 for all $t>0$. Partial answers are given in (\cite{Erraoui-Ouknine-2008}, \cite{hhm-00},
\cite{hm-2004}) where the authors established the existence of such transforms. In
particular, for an infinite sequence $\Lambda$ satisfying either $\sup
\lambda_j=+\infty$ or $0<\lambda_1<\lambda_2<...$ there exists no such a transform such that $(T(B)_t, t\geq 0)$ is a semimartingale relative to the filtration of $B$. We see that a necessary and sufficient condition for the existence of  transforms of the form (\ref{Volterra-general}) with infinite dimensional orthogonal complement is
\begin{equation}\label{MS}
\sum_{j=1}^{\infty}\frac{p_j}{p_j^2+1}<\infty.
\end{equation}
This is the well-known M\"untz-Szasz condition which is  necessary and sufficient  for $f_1$, $f_2$, $\cdots$, to be incomplete in $L^2[0,1]$, see for example \cite{Borwein-Erdelyi-95}.
Furthermore,  $(T(B)_t, t\geq 0)$ is a semimartingale relative to the filtration of $B$ if and only if $(\lambda_k)$ is bounded and satisfies (\ref{MS}). Plainly, the latter happens if and only if  $\sum_{j=1}^{\infty} p_j<\infty$.
% Also, when there is
%convergence, by letting $n\rightarrow \infty$ in (\ref{fip}) we obtain the
% class of kernels constructed in Theorem 2 of
%\cite{hhm-00} for the moving average representations.

%Next, we need to introduce iteration
%notations $T_{n}^{(0)}=Id$, $T_n^{(1)}=\Sigma$ and
%$T_n^{(m)} = T_n^{(m-1)} \circ T_n$, for $m \geq 2$,
%where $\circ$ stands for the composition rule of operators. Similarly we denote
%by $k^{(m)}$ the kernel corresponding to the transform $T_n^{(m)}$. In
%Theorem \ref{Ergodic-thm} we shall shaw that the orthogonal
%decomposition
%\begin{equation} \label{eqn:M-K83} {\mathcal F}_t^{B} =
%\bigotimes_{m = 0}^{\infty} \sigma \left( \int_0^t u^{\lambda_j} \,
%dT_n^{(m)}(B)_u, 1\leq j\leq m\right)
%\end{equation}
%holds for any $t\geq 0$. As a consequence, we conclude that
%M\"untz transforms are strongly mixing and ergodic.
\section{{\bf M\"untz Gaussian spaces and transforms}}\label{Muntz}
Throughout this paper, we assume that $\Lambda=\{\lambda_1,\lambda_2,\cdots\}$ is a sequence of distinct real numbers satisfying condition (\ref{condition-1}). Thus, the generalized M\"untz
polynomials $f_j(x):=x^{\lambda_j}$, for $j=1,2,\cdots$,  lie in
\[
L_{loc}^2(\mathbb{R}_+):=\{f:\mathbb{R}_+\rightarrow \mathbb{R}; f\in L^2[0,t]\: \hbox{for all}\: 0<t<\infty \}.
\]
 For $t>0$, let us  introduce
\begin{equation*}
M_{n,t}=\hbox{Span}\{ x^{\lambda_1}, x^{\lambda_2}, \cdots, x^{\lambda_n}; x\in [0,t]\}
\end{equation*}
and
\begin{equation*}
 M_{\infty, t}=\hbox{Span}\{ x^{\lambda_1}, x^{\lambda_2}, \cdots; x\in [0,t]\}
\end{equation*}
which are called M\"untz spaces. An associated orthogonal system, known as M\"untz-Legendre polynomials, is specified by $L_1(x)=x^{\lambda_1}$ and $L_2$, $L_3$ $\cdots$, described by
\begin{equation}
\label{eqn:M-K18} L_k(x) = \sum_{j = 1}^k c_{j,k}
x^{\lambda_j},\quad c_{j,k} = \frac{ \prod_{l = 1}^{k-1}
(\lambda_l + \lambda_j + 1)}{ \prod_{l = 1, l \not= j}^{k}
(\lambda_j - \lambda_l)},\quad k=2,3,\cdots,
\end{equation}
see
  \cite{Borwein-Erdelyi-95} and
\cite{Borowein-Erdelyi-Zhang-94}; note that we use slightly different notations since we start the sequence $\Lambda$ with $\lambda_1$ instead of $\lambda_0$.  Recall that $L_k(1)=1$ for $k=1, 2, \cdots$.   Next, to the
linear spaces $M_{n,t}$ and $M_{\infty,t}$ we associate,
respectively, the families of M\"untz Gaussian spaces defined by (\ref{Muntz-g}) and (\ref{Muntz-g-infty}). Recall that the closed linear span of $\{B_s, s\leq t \}$, or the first Wiener chaos of $B$, is given by
\begin{equation}\label{FWC}
H_t(B) = \left\{ \int_0^t\! f(u) \,
dB_u; f \in L^2[0,t]\right\}.
\end{equation}
It follows that  the orthogonal
complements of   $G_{t}(\lambda_1, \cdots, \lambda_n;B)$ and
$G_{t}(\lambda_1, \lambda_2\cdots; B)$, in $H_t(B)$, are respectively given by
\begin{equation*}
 G_{t}^{\perp}(\lambda_1, \cdots, \lambda_n; B) = \left\{ \int_0^t\! f(u) \, dB_u; f \in
L^2[0,t], \int_0^t\! f(s)p(s) \, ds = 0, p \in M_{n,t}
\right\}
\end{equation*}
and
\begin{equation*} G^{\perp}_{t}(\lambda_1, \lambda_2, \cdots; B) = \left\{ \int_0^t\! f(u)\, dB_u; f \in
L^2[0,t], \int_0^t\! f(s)p(s)\, ds = 0, p \in M_{\infty, t}
\right\}.
\end{equation*}
Following \cite{alili-wu-01}, the transform $T$ (resp.$\:$kernel) defined by (\ref{Volterra}) is called a Goursat-Volterra transform (resp.$\:$kernel) of order $n$ if $(T(B)_t, t\geq 0)$ is a Brownian motion and there exists $n$ linearly independent functions $g_j\in L^2_{loc}(\mathbb{R}_+)$ such that
\[
H_t(B)=H_t(T(B))\oplus \hbox{Span}\{\int_0^t\! g_j(s)\, dB_s, j=1,2,\cdots n \}
\]
for all $t>0$. We are now ready to determine a Goursat-Volterra transform $T_n$ associated to the M\"untz polynomials $f_1$, $f_2$, $\cdots$, $f_n$, in the case when $n$ is finite.

\begin{Theorem}
\label{mg-thm}  Assuming that $n<\infty$ then
\begin{eqnarray*}
 k_n(t,s):=
 \left\{
\begin{array}{lll}
    t^{-1}K_n(s/t) &\hbox{if}\quad s\leq t;\\
  0  &\quad \hbox{otherwise},
\end{array}
\right.
\end{eqnarray*}
where
\begin{equation}
\label{eqn:M-K33-2}K_n(s)=\sum_{j =
 1}^n a_{j,n}
 s^{\lambda_j},\quad
a_{j,n} = \frac{\prod_{l = 1}^n (\lambda_j + \lambda_l +
1)}{\prod_{l=1, l\neq j}^{n}(\lambda_j - \lambda_l)}, \quad j=1,...,
n,
\end{equation}
is a Goursat-Volterra kernel of order $n$. Furthermore, writing $T_n$ for the Goursat-Volterra transform
associated to $k_n$, the orthogonal complement of $H_{t}(T_n(B))$ in $H_t(B)$ is
$G_{t}(\lambda_1, \cdots, \lambda_n; B)$ for all $t\geq 0$. Note that $T_n$ is of the form (\ref{Volterra-general}) with $\rho$ prescribed by (\ref{Form-kernel}).
\end{Theorem}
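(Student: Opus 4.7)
The plan is to proceed in four stages.

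I would first verify that the coefficients $a_{j,n}$ given by (\ref{eqn:M-K33-2}) satisfy the linear system (\ref{system-0}). Consider the rational function
\[
R(z) := \prod_{l=1}^{n} \frac{z + \lambda_l + 1}{z - \lambda_l},
\]
which has simple poles at $\lambda_1, \ldots, \lambda_n$ with residues exactly the $a_{j,n}$ of (\ref{eqn:M-K33-2}) and which tends to $1$ at infinity. Its partial-fraction expansion therefore reads $R(z) = 1 + \sum_{j=1}^{n} a_{j,n}/(z - \lambda_j)$. Evaluating at $z = -\lambda_k - 1$ kills the $l = k$ factor in the numerator, so $R(-\lambda_k - 1) = 0$; rearranged, this is exactly (\ref{system-0}).

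Next, I would establish the orthogonality $H_t(T_n(B)) \perp G_t(\lambda_1, \ldots, \lambda_n; B)$. Stochastic Fubini gives $T_n(B)_t = \int_0^t \phi(t,v)\,dB_v$ with $\phi(t,v) = 1 - \int_v^t l(s,v)\,ds$; a change of variables $x = v/s$ puts this in the scale-invariant form $\phi(t,v) = \rho(v/t)$, where $\rho(w) := 1 - \int_w^1 K_n(x)/x\,dx$ on $(0,1]$. The It\^o isometry together with the substitution $w = v/s$ yields
\[
\int_0^s l(s,v)\, v^{\lambda_j}\,dv = s^{\lambda_j} \int_0^1 K_n(w)\,w^{\lambda_j}\,dw = s^{\lambda_j}
\]
by (\ref{system-0}), so that $\mathbb{E}\bigl[T_n(B)_u \int_0^u v^{\lambda_j}\,dB_v\bigr] = 0$ for each $j$; since $T_n(B)_u \in H_u(B)$ and $B$ has independent increments, this suffices for $H_t(T_n(B)) \perp G_t(\lambda_1, \ldots, \lambda_n; B)$ at every $t > 0$. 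Equivalently, $\int_0^1 \rho(w)\,w^{\lambda_j}\,dw = 0$ for $j = 1, \ldots, n$.

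To show $T_n(B)$ is Brownian I would check the covariance equals $s \wedge t$; it is already a continuous centred Gaussian process. Using $\phi(t,v) = \rho(v/t)$ and scaling $v = sw$, $\mathbb{E}[T_n(B)_s T_n(B)_t]$ for $s \le t$ reduces to $s \cdot I(s/t)$ with $I(\beta) := \int_0^1 \rho(w)\rho(\beta w)\,dw$. Differentiating under the integral and using $\rho'(w) = K_n(w)/w$ expresses $I'(\beta)$ as a linear combination of the orthogonality integrals $\int_0^1 \rho(w)\,w^{\lambda_j}\,dw$, each of which vanishes; hence $I \equiv I(1)$. Finally $I(1) = 1$ by integration by parts,
\[
\int_0^1 \rho^2\,dw = \rho(1)^2 - \lim_{w \to 0^+} w\rho(w)^2 - 2\int_0^1 K_n(w)\rho(w)\,dw = 1,
\]
where $\rho(1) = 1$, the boundary term at $0$ vanishes thanks to $\lambda_j > -\tfrac12$, and the last integral is zero because $K_n \in M_{n,1}$.

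For the direct-sum decomposition $H_t(B) = H_t(T_n(B)) \oplus G_t(\lambda_1, \ldots, \lambda_n; B)$ I would work in $L^2[0,t]$ via the Wiener integral: $H_t(T_n(B))$ is the range of the Volterra map $\Phi g(v) := g(v) - \int_v^t g(u) l(u,v)\,du$, whose adjoint is $\Phi^* f(u) = f(u) - \int_0^u l(u,v) f(v)\,dv$. The Stage~2 computation shows $\Phi^*(v^{\lambda_j}) = 0$, so $M_{n,t} \subseteq \ker \Phi^*$. Setting $G_i(u) := \int_0^u v^{\lambda_i} f(v)\,dv$ turns $\Phi^* f = 0$ into an $n$-dimensional non-autonomous linear ODE system for $(G_1,\ldots, G_n)$ with initial condition $G_i(0^+) = 0$, so the constrained solution space has dimension at most $n$. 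The explicit independent solutions $f_k(v) = v^{\lambda_k}$ (whose $G^{(k)}_i(u) = u^{\lambda_i + \lambda_k + 1}/(\lambda_i + \lambda_k + 1)$ form an invertible matrix of Cauchy type thanks to $\lambda_j > -\tfrac12$) already exhaust $\ker \Phi^*$, so $\mathrm{Range}(\Phi) = M_{n,t}^\perp$, which is the required decomposition. The main subtleties will be controlling the boundary behaviour of $\rho$ near $0$ in Stage~3, where logarithmic or mild power singularities may appear when some $\lambda_j$ is close to $-\tfrac12$, and the dimension count for the singular ODE system in Stage~4.
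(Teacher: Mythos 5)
Your proposal is correct, and its skeleton matches the paper's proof: verify the linear system (\ref{system-0}), establish the Brownian-motion property, identify $H_t(B)\ominus H_t(T_n(B))$ through the integral equation $f(u)=\int_0^u k_n(u,v)f(v)\,dv$ and an ODE dimension count, and recover $\rho_n$ by stochastic Fubini. But you treat the key steps somewhat differently. For the coefficients you verify (\ref{eqn:M-K33-2}) by partial fractions of $\prod_l(z+\lambda_l+1)/(z-\lambda_l)$; this is exactly the ``obvious decomposition'' the paper only invokes later, in the proof of Proposition \ref{moving-average-representation}, whereas the paper's argument inside Theorem \ref{mg-thm} goes the other way, using the polynomial $p_n$ to show that any solution of (\ref{system-0}) must equal (\ref{eqn:M-K33-2}) (which also gives uniqueness, not needed for the statement). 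For the Brownian-motion property the paper cites the self-reproduction criterion (\ref{eqn:K10}) of F\"ollmer--Wu--Yor and observes it is equivalent to (\ref{system-0}); you instead compute the covariance directly, showing $I(\beta)=\int_0^1\rho(\beta w)\rho(w)\,dw$ is constant by differentiation (using only the relations $\int_0^1\rho(w)\,w^{\lambda_j}\,dw=0$) and $I(1)=1$ by parts --- a self-contained alternative whose only extra cost is justifying differentiation under the integral and the vanishing boundary term at $0$, both harmless since $\lambda_j>-\tfrac12$ puts $\rho$ and $w^{\lambda_j}$ in $L^2[0,1]$. For the complement you recast the paper's reduction to an $n$-th order scalar ODE as a first-order system for $G_i(u)=\int_0^u v^{\lambda_i}f(v)\,dv$; this is arguably cleaner, since on $(0,t]$ the coefficients are continuous, the solution space is exactly $n$-dimensional, and $f=\sum_j a_{j,n}u^{-\lambda_j-1}G_j$ is determined by $G$, so the singularity at $0$ you worry about does not actually enter the dimension count; the framing $\mathrm{Range}(\Phi)=(\ker\Phi^*)^{\perp}$ is legitimate because $\Phi$ is an isometry of $L^2[0,t]$ once $T_n(B)$ is known to be a Brownian motion, so its range is closed. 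The one item you should add explicitly is the integrability check the paper performs, $\int_0^t\bigl(\int_0^u k_n^2(u,v)\,dv\bigr)^{1/2}du<\infty$ (equivalently $K_n\in L^2[0,1]$, indeed $\int_0^1K_n^2=K_n(1)$), which is what makes (\ref{Volterra}) well defined and licenses your two stochastic-Fubini interchanges.
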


\begin{proof}[Proof of Theorem \ref{mg-thm}] $T_n(B)$ is a Brownian motion  if and only if $k_n$ satisfies the self-reproduction property
\begin{equation}
\label{eqn:K10} k_n(t,s) = \int_0^s\! k_n(t,u) k_n(s,u)\, du
\end{equation}
for a.e. $s \le t$, which is found in Theorem 6.1 in \cite{fwy99}. This is obtained by writing $\mathbb{E}[T_n(B)_tT_n(B)_s]=s\wedge t$, differentiating and rearranging terms.
 But, if we set $k_n(t,s) = t^{-1} \sum_{j = 1}^n
a_{j,n} ( s/t)^{\lambda_j}$, then (\ref{eqn:K10}) is equivalent to saying that $(a_{j,n},
j=1,2,\cdots, n)$ solves the linear system (\ref{system-0}). To study the system, consider the
$n$-degree polynomial
\[
p_n(x) = \prod_{j = 1}^n (x + \lambda_j + 1)
- \sum_{k = 1}^n a_{k,n} \prod_{j = 1, j \not= k}^n (x + \lambda_j +
1)\]
which, of course, has at most $n$ roots. But $p_n(x) = 0$ is
equivalent to
$
\sum_{k = 1}^n \frac{a_{k,n}}{x + \lambda_k + 1} =
1$. This fact, when combined with $\lim_{x\rightarrow
\infty}p_n(x)/x^n=1$, implies that $p_n(x) = \prod_{j = 1}^n (x -
\lambda_j)$. Now, let us choose $m \in \{ 1, \cdots, n \}$ and substitute
the latter product formula in the expression of $p_n(x)$. Dividing
 both sides by $\prod_{j\not=m} (x + \lambda_j + 1)$,
rearranging terms and letting $x\rightarrow -(\lambda_m + 1)$ we
obtain the expressions of $a_{1,n}$,$
a_{2,n}$, $\cdots,$  $a_{n,n}$. Next, $k_n$ is a Volterra kernel because
it is continuous on $\{(u,v)\in \mathbb{R}_+^2;  u>
v \}$ and satisfies the following integrability condition which is enough for (\ref{Volterra}) to be well defined. We have
\begin{eqnarray*}
\int_0^t\! \left(\int_0^u k_n^2(u,v) \, dv \right)^{1/2}\, du&=&\int_0^t\!
\left(\int_0^1 k_n^2(u,u r) u \, dr \right)^{1/2}\, du\\
&=&2 \sqrt{t}\left(\int_0^1\! K_n^2(r) \, dr \right)^{1/2}\\
&=&2 \sqrt{t}K_n^{1/2}(1)<+\infty,
\end{eqnarray*}
where we used the homogeneity and the self-reproduction properties of
$k_n$.  Finally, we need to identify $H_t(B)\ominus H_t(T_n(B))$ for an arbitrarily fixed $t>0$. The condition $\int_0^t\! f(u)\, dB_u\, \bot\, T(B)_s$ for all $s\leq t$ is equivalent to
\begin{eqnarray}\label{int-equation}
\int_0^s\, f(r)\, dr = \int_0^s\! du\int_0^u k_n(u,v) f(v)\, dv.
\end{eqnarray}
If we write $k_n(u,v)=\sum_{j=1}^n \varphi_j(u) f_j(v)$ then by differentiating the latter equation we obtain the integral equation
\begin{eqnarray*}
f(s)  &=&\int_0^s\! k_n(s,v) f(v)\, dv\\
&=& \sum_{j=1}^n  \varphi_j(s)\int_0^s\, f_j(v) f(v)\, dv
\end{eqnarray*}
for a.e. $t>0$, this can also be found in \cite{jeulin-Yor-90}. Clearly, if $f$ solves it then $f(t)/\varphi_1(t)$ must be absolutely continuous with respect to the Lebesgue measure. Repeating this argument, we see that (\ref{int-equation}) is  equivalent
to an ordinary linear differential equation of degree $n$ which should hold for a.e. $s\in [0,t]$. The
functions $u\rightarrow u^{\lambda_j}$, $j = 1, \cdots, n$, being $n$ linearly
independent solutions, we conclude that
$G_{t}(\lambda_1, \cdots, \lambda_n; B)$ is the orthogonal complement of $H_{t}(T_n(B))$ in $H_t(B)$ as required.  Next, by using the homogeneity property of  the kernel $k_n$ and the stochastic Fubini theorem, we can
write
\begin{eqnarray*}
T_n(B)_t&=&B_t - \int_0^{t}\! \int_0^u\! k_n(u,v)\,   dB_v\, du \\
&=& \int_0^t\! \left(1-\int_v^t k_n(u,v)\, du \right)  \,dB_v \\
&=& \int_0^t\! \left(1-\int_{1}^{t/v} k_n(vr,v)v\,  dr \right)\,  dB_v\\
 &=& \int_0^t\! \rho_n(t/v) \, dB_v, \quad t\geq 0,
\end{eqnarray*}
where
\begin{equation*}
\rho_n(x)=1-\int_{1}^{x}\! k_n(r, 1) \, dr=1-\int_1^x\! K_n(1/r)\frac{dr}{r}.
\end{equation*}
\end{proof}
\begin{Remark} Since when $n<\infty$ we have $K_n\in L^2[0,1]$, by using the homogeneity property of $k_n$ we obtain that
$\int_0^1\! ds\int_0^1\! k_n^2(s,v)\, dv=+\infty$ i.e.
$k_n\notin L^2([0,1]\times[0,1])$.  The representation (\ref{Volterra})  is the  semimartingale decomposition of $T_n(B)$ with respect to $\mathcal{F}^{B}$; it is noncanonical relative to the filtration of $B$ since $\mathcal{F}_t^{T(B)}\varsubsetneq \mathcal{F}^{B}_t$ for all $t>0$. The Volterra representation of $T_n(B)$ as
 $
T_n(B)_t=X_t-\int_0^t\! ds\int_0^s\! l(s,v)\, dX_v$,
where $l:\mathbb{R}_+^2\rightarrow \mathbb{R}$ is such that $l(s,v)=0$ for $s<v$ and $X$ is a Brownian motion, is not unique. Indeed, one representation is given with $X=B$ and $l=k_n$ and another one is given with $X=T_n(B)$
and $l\equiv 0$. But if we add the condition  $l\in L^2([0,1]\times [0,1])$ then the representation above is unique, see \cite{Hitsuda-68}.
\end{Remark}

The covariance matrix
\[
(m_t^n)_{lj}=\frac{t^{\lambda_l+\lambda_j+1}}{\lambda_l+\lambda_j+1}, \quad l,j=1,2,\cdots, n,
 \]
 of the Gaussian process $(\int_0^t\! f^*(s)\, dB_s)$, where $f:=(f_1, \cdots, f_n)^*$ is the transpose of the row vector $(f_1, \cdots, f_n)$, has an inverse matrix which we denote by $\alpha_t^n$. In fact, $m_1^n$ is a Cauchy matrix and
an explicit formula for its inverse can be found  in (\cite{Gohberg-Koltracht}, \cite{Schechter}).  Note also that the Goursat form of $k_n$ given below  is given in a semi-explicit form in  \cite{hhm}.  Here we propose another method to compute the entries of $\alpha_t^n$ and $\varphi$.

\begin{Proposition}\label{Matrix-form} The kernel $k_n$  of Theorem \ref{mg-thm} satisfies
\begin{eqnarray*}
 k_n(t,s)=
 \left\{
\begin{array}{lll}
    \varphi^*(t)\cdot f(s) &\hbox{if}\quad s\leq t;\\
  0  &\hbox{otherwise},
\end{array}
\right.
\end{eqnarray*}
where $\varphi(.)=\alpha(.) \cdot f(.)$,  $\varphi_{l}(t)=a_{l,n}t^{-\lambda_l-1}$, $l=1,2,\cdots, n
$ and the entries of $\alpha_t^n$ are given by $
(\alpha_t^n)_{l,j}= a_{l,n}a_{j,n}(\lambda_l+\lambda_j+1)^{-1} \,
t^{-\lambda_l-\lambda_j-1}$.
\end{Proposition}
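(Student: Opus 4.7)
The plan is to verify the four assertions of the proposition in the natural order: first read off a Goursat factorization of $k_n$ directly from Theorem \ref{mg-thm}, then identify the vector $\phi$ and the matrix $\alpha_t^n$ via two short computations based on the linear system (\ref{system-0}) and the polynomial identity derived inside the proof of Theorem \ref{mg-thm}.

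First I would substitute the formula $K_n(s)=\sum_{j=1}^n a_{j,n}s^{\lambda_j}$ of (\ref{eqn:M-K33-2}) into $k_n(t,s)=t^{-1}K_n(s/t)$ and separate the variables:
\begin{equation*}
k_n(t,s)=\sum_{j=1}^n a_{j,n}\,t^{-\lambda_j-1}\,s^{\lambda_j},\qquad s\leq t,
\end{equation*}
which already displays $k_n(t,s)=\phi^{\ast}(t)\cdot f(s)$ with $\phi_l(t)=a_{l,n}t^{-\lambda_l-1}$.

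Next I would verify that the matrix $\alpha_t^n$ with entries $a_{l,n}a_{j,n}(\lambda_l+\lambda_j+1)^{-1}t^{-\lambda_l-\lambda_j-1}$ is the inverse of $m_t^n$. Computing the product $m_t^n\cdot\alpha_t^n$ entrywise, all powers of $t$ collapse to $t^{\lambda_l-\lambda_k}$ and one is left with
\begin{equation*}
(m_t^n\cdot\alpha_t^n)_{l,k}=a_{k,n}\,t^{\lambda_l-\lambda_k}\sum_{j=1}^n\frac{a_{j,n}}{(\lambda_l+\lambda_j+1)(\lambda_j+\lambda_k+1)}.
\end{equation*}
For $l\neq k$ the partial fraction identity
\begin{equation*}
\frac{1}{(\lambda_l+\lambda_j+1)(\lambda_j+\lambda_k+1)}=\frac{1}{\lambda_l-\lambda_k}\left(\frac{1}{\lambda_j+\lambda_k+1}-\frac{1}{\lambda_l+\lambda_j+1}\right)
\end{equation*}
combined twice with (\ref{system-0}) yields zero. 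The diagonal case $l=k$ is the only delicate step: I would reuse the polynomial identity established inside the proof of Theorem \ref{mg-thm}, namely
\begin{equation*}
\sum_{k=1}^n\frac{a_{k,n}}{x+\lambda_k+1}=1-\frac{\prod_{j=1}^n(x-\lambda_j)}{\prod_{j=1}^n(x+\lambda_j+1)},
\end{equation*}
differentiate with respect to $x$, evaluate at $x=\lambda_l$ (where the numerator $\prod_j(x-\lambda_j)$ vanishes simply so only one term of the product rule survives), and compare with the closed form of $a_{l,n}$ in (\ref{eqn:M-K33-2}) to obtain $\sum_k a_{k,n}(\lambda_l+\lambda_k+1)^{-2}=1/a_{l,n}$. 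This gives $(m_t^n\cdot\alpha_t^n)_{l,l}=1$, completing the identification $\alpha_t^n=(m_t^n)^{-1}$.

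Finally, the relation $\phi(t)=\alpha_t^n\cdot f(t)$ is a direct check: using the explicit entries of $\alpha_t^n$ and the definition of $f$,
\begin{equation*}
(\alpha_t^n\cdot f(t))_l=a_{l,n}\,t^{-\lambda_l-1}\sum_{j=1}^n\frac{a_{j,n}}{\lambda_l+\lambda_j+1}=a_{l,n}\,t^{-\lambda_l-1}=\phi_l(t),
\end{equation*}
where the middle equality is exactly the defining system (\ref{system-0}). The only real obstacle in the whole argument is the diagonal identity at the $l=k$ case, which requires the global polynomial factorization of $p_n$ rather than just the pointwise relations (\ref{system-0}); everything else reduces to partial fractions and a reorganization of indices.
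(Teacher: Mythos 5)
Your proposal is correct, but it proves the proposition by a route genuinely different from the paper's. You read off the Goursat form $k_n(t,s)=\phi^*(t)\cdot f(s)$ directly from (\ref{eqn:M-K33-2}) (as the paper also does), but then you establish the formula for $\alpha_t^n$ by brute-force verification that the claimed matrix inverts the Cauchy-type covariance matrix $m_t^n$: the off-diagonal entries of $m_t^n\cdot\alpha_t^n$ vanish by the partial-fraction identity together with two applications of (\ref{system-0}), and the diagonal entries equal $1$ because differentiating the identity $\sum_k a_{k,n}(x+\lambda_k+1)^{-1}=1-\prod_j(x-\lambda_j)/\prod_j(x+\lambda_j+1)$ at $x=\lambda_l$ gives $\sum_k a_{k,n}(\lambda_l+\lambda_k+1)^{-2}=1/a_{l,n}$, which matches the closed form of $a_{l,n}$; I checked both computations and they are sound, and the final check $\phi=\alpha\cdot f$ is indeed just (\ref{system-0}) again. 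The paper instead avoids all matrix algebra by invoking Theorem 2.2 of \cite{alili-wu-01}, which gives the structural formula $\alpha_t^n=\int_t^\infty \phi(u)\cdot\phi^*(u)\,du+\alpha_\infty^n$ with $\alpha_\infty^n\equiv 0$ because the $f_j$ are not square integrable on $(0,\infty)$; the entries of $\alpha_t^n$ then follow from a one-line integration of $a_{l,n}a_{j,n}u^{-\lambda_l-\lambda_j-2}$, using $\lambda_l+\lambda_j+1>0$. What your argument buys is self-containedness: it needs no external result and in passing reproves the explicit inverse of the Cauchy matrix that the paper only cites from \cite{Gohberg-Koltracht} and \cite{Schechter}; what the paper's argument buys is brevity and a conceptual explanation of where the entries of $\alpha_t^n$ come from (a tail integral of $\phi\phi^*$), at the price of relying on the quoted theorem. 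Both are complete proofs of the statement.
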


\begin{proof} [Proof of Proposition \ref{Matrix-form}] Assume that $k_n$ is of the given Goursat form where $\varphi_1,\varphi_2, \cdots, \varphi_n$ are unknown. By using the self-reproduction property (\ref{eqn:K10}) a little algebra gives that $\varphi(.)=\alpha(.) \cdot f(.)$.   The entries of $\varphi(t)$ are
identified from the expression of $k_n$ given in Theorem
\ref{mg-thm}. Next, from Theorem 2.2 in \cite{alili-wu-01} , we know that
$(\alpha_{t}^n, t>0)$ is given, in terms of $\varphi$, by
\begin{equation*}
\alpha_t^n= \int_t^{\infty}\! \varphi(u) \cdot \varphi^*(u) \, du +
\alpha_{\infty}, \quad \varphi(t) = \alpha_t \cdot f(t), \quad
t>0.
\end{equation*}
But, here $\alpha_{\infty}^n\equiv 0$ because $f_1$, $f_2$,
$\cdots$, $f_n$ are not square-integrable over $(0, +\infty)$. Plugging in the
vector $\varphi$ we obtain the matrix $\alpha_t^n$.
\end{proof}
\begin{Remark} \label{Bridges}
In terms of filtrations, for $n<\infty$ and $0<T<\infty$, we have
${\mathcal F}_T^B = {\mathcal F}_T^{T_n(B)} \otimes \sigma
\left(G_T(\lambda_1, \cdots, \lambda_n;B) \right)$.
In fact, $\mathcal{F}_T^{T_n(B)}$ coincides, up to null sets, with
$\sigma \{B_u^{(br)}, u\leq T \}$, where  $(B_u^{(br)}, u\leq T)$ is
the $f$-generalized bridge over the interval $[0,T]$. A realization of this is given by
$
B^{(br)}_u=B_u-\psi^*_T(u)\cdot \int_0^{T}\! f(s)
\, dB_s$
 where $\psi_T(u)=\alpha_T^n \cdot \int_0^u\! f(r)\, dr$, for $u<T$. This is called a generalized bridge because $\int_0^T\! f_j(s)\, dB_s^{(br)}=0$ for $j=1,\cdots, n$. Note that $T_n(B^{(br)})=T_n(B)$ on $(0,T)$; we refer to
\cite{alili00} for more details on these processes.
\end{Remark}

The objective of the next proposition is to show that we can express $K_n$  in terms of M\"untz-Legendre
polynomials given by formula (\ref{eqn:M-K18}) which form an orthogonal basis of $M_{n,1}$. In the special case when $\lambda_j=j$ for all $j$, an integro-difference equation satisfied by $\rho_n$, $n=1,2, \cdots$, was discovered in \cite{Chiu-1995}. The second assertion of the following result proves useful for finding the analogue of Chiu's result in the general M\"untz framework.

\begin{Proposition} \label{properties} Recall that the functions $L_n$ and $K_n$  are given by (\ref{eqn:M-K18}) and (\ref{eqn:M-K33-2}), respectively.
The following assertions hold true.

1) We have
\begin{equation}\label{expression-Muntz-Legendre}
K_n(x)=
\sum_{j=1}^{n}(1+2\lambda_j)L_{j}(x), \quad x\leq 1.\end{equation}
In particular, $K_n(1)=\sum_{j=1}^n (1+2\lambda_j)$. Consequently, we have
\[
K_n(x)=x^{-\lambda_n}\frac{\partial}{\partial x}\left(x^{\lambda_n
+1}L_n(x) \right)
\]
and, equivalently,
\[
L_n(x)=x^{-\lambda_n-1}\int_0^x\! s^{\lambda_n}K_n(s)\,  ds.
\]
 Note that unlike
M\"untz-Legendre polynomials, the M\"untz polynomial  $K_n$ does not depend on
the order of $\lambda_1, \lambda_2,\cdots, \lambda_n$.

2) The
sequence $K_{n}$, $n=1,2, \cdots$, satisfies the integro-difference equation
\[
K_n(x)=K_{n-1}(x)+(2\lambda_{n}+1)x^{\lambda_n}\left(1-\int_x^1\!
u^{-\lambda_n-1}K_{n-1}(u)\, du\right).
\]
\end{Proposition}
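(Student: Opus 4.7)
The plan is to reduce part 1 to a computation of coordinates in the orthogonal basis of M\"untz-Legendre polynomials, and then deduce part 2 from the telescoping relation $K_n - K_{n-1} = (1+2\lambda_n) L_n$ combined with the integral representation of $L_n$ furnished by part 1.

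For part 1, I would first recognize that the defining linear system (\ref{system-0}) is nothing but $\int_0^1 K_n(x)\, x^{\lambda_k}\, dx = 1$ for $k=1,\ldots, n$, since $\int_0^1 x^{\lambda_j + \lambda_k}\, dx = 1/(\lambda_j + \lambda_k + 1)$. Recalling the classical fact that $L_1,\ldots, L_n$ form an orthogonal basis of $M_{n,1}$ in $L^2[0,1]$ with $\|L_k\|_{L^2[0,1]}^2 = 1/(1+2\lambda_k)$ (see \cite{Borwein-Erdelyi-95}), I would expand $K_n = \sum_{j=1}^n b_j L_j$ and use $\int_0^1 K_n L_k\, dx = \sum_i c_{i,k} \int_0^1 K_n(x)\, x^{\lambda_i}\, dx = L_k(1) = 1$ to read off $b_k = 1+2\lambda_k$, which gives (\ref{expression-Muntz-Legendre}); evaluating at $x=1$ then yields the stated value of $K_n(1)$. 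For the derivative identity, I would match coefficients directly: the algebraic identity $a_{i,n} = (\lambda_i + \lambda_n + 1)\, c_{i,n}$ is immediate from (\ref{eqn:M-K18}) and (\ref{eqn:M-K33-2}), and it says precisely $K_n(x) = (\lambda_n+1) L_n(x) + x L_n'(x) = x^{-\lambda_n}\frac{d}{dx}\bigl(x^{\lambda_n+1} L_n(x)\bigr)$. Integrating from $0$ to $x$ (the boundary term vanishes thanks to condition (\ref{condition-1})) yields the inverse formula $L_n(x) = x^{-\lambda_n-1}\int_0^x s^{\lambda_n} K_n(s)\, ds$. Permutation-invariance of $K_n$ is transparent from (\ref{eqn:M-K33-2}).

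For part 2, part 1 gives $K_n - K_{n-1} = (1+2\lambda_n) L_n$, so it suffices to express $L_n$ in terms of $K_{n-1}$ rather than $K_n$. Starting from $L_n(x) = x^{-\lambda_n - 1}\int_0^x s^{\lambda_n} K_n(s)\, ds$ and differentiating, I would obtain $\frac{d}{dx}\bigl(x^{-\lambda_n} L_n(x)\bigr) = x^{-\lambda_n - 1}\bigl(K_n(x) - (2\lambda_n+1) L_n(x)\bigr) = x^{-\lambda_n - 1} K_{n-1}(x)$. Integrating from $x$ to $1$ and using $L_n(1) = 1$ then gives $L_n(x) = x^{\lambda_n}\bigl(1 - \int_x^1 u^{-\lambda_n - 1} K_{n-1}(u)\, du\bigr)$, and multiplying by $1+2\lambda_n$ and adding $K_{n-1}$ delivers the announced integro-difference equation.

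The only real obstacle is the initial recognition that the system (\ref{system-0}) is an orthogonality statement in $L^2[0,1]$; once this is in hand together with the elementary identity $a_{i,n} = (\lambda_i + \lambda_n + 1) c_{i,n}$ linking the coefficients of $K_n$ and $L_n$, both assertions of part 1 and the integro-difference relation of part 2 reduce to short calculations.
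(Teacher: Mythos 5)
Your proof is correct, but it takes a genuinely different route from the paper's, chiefly in part 1. The paper obtains (\ref{expression-Muntz-Legendre}) purely algebraically: it verifies from the explicit coefficient formulas that $(1+2\lambda_n)c_{n,n}=a_{n,n}$ and $(1+2\lambda_n)c_{j,n}=a_{j,n}-a_{j,n-1}$, deduces $K_n-K_{n-1}=(1+2\lambda_n)L_n$, and telescopes with $K_0\equiv 0$; this needs nothing beyond $L_j(1)=1$. You instead read the system (\ref{system-0}) as the moment conditions $\int_0^1 K_n(x)x^{\lambda_k}\,dx=1$, $k\le n$, and compute the coordinates of $K_n$ in the orthogonal basis $(L_j)$, which additionally requires the classical normalization $\int_0^1 L_k^2(x)\,dx=(1+2\lambda_k)^{-1}$ (a fact the paper only quotes later, in the proof of Theorem \ref{ifinite-semimartingale}); in exchange, your computation exhibits $K_n$ as the element of $M_{n,1}$ with $\int_0^1 K_n(x)p(x)\,dx=p(1)$ for all $p\in M_{n,1}$, which fits nicely with the subsequent proposition identifying $k_n(t,\cdot)$ with the reproducing kernel system. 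For the derivative and integral identities you use the same by-product identity $a_{j,n}=(\lambda_j+\lambda_n+1)c_{j,n}$ as the paper, with the boundary term at $0$ correctly handled via (\ref{condition-1}). In part 2 your argument is self-contained: you derive $\bigl(x^{-\lambda_n}L_n(x)\bigr)'=x^{-\lambda_n-1}K_{n-1}(x)$ directly from part 1 (differentiating the integral representation and using the telescoping relation) and then integrate from $x$ to $1$ using $L_n(1)=1$; the paper reaches the same intermediate identity by invoking the recurrence $x^{\lambda_n+\lambda_{n-1}+1}\bigl(x^{-\lambda_n}L_n(x)\bigr)'=\bigl(x^{\lambda_{n-1}+1}L_{n-1}(x)\bigr)'$ quoted from \cite{Borowein-Erdelyi-Zhang-94} and then passes through a differential form of the integro-difference equation before concluding. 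So your version trades the paper's external recurrence in part 2 for the orthogonality and norm facts in part 1; both arguments are complete and correct.
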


\begin{proof}[Proof of Proposition \ref{properties}] 1) We have $(1+2\lambda_n)c_{n,n}=a_{n,n} $
and $(1+2\lambda_n)c_{j,n}=a_{j,n}-a_{j,n-1} $ for $j=1,2, \cdots, n-1$.
Thus, we can write
\begin{eqnarray*}
K_n(x)-K_{n-1}(x)&=&a_{n,n}x^{\lambda_n}+\sum_{j=1}^{n-1}(a_{j,n}-a_{j,n-1})x^{\lambda_j}\\
&=&(1+2\lambda_n)c_{n,n}x^{\lambda_n}+(1+2\lambda_n)\sum_{j=1}^{n-1}c_{j,n}x^{\lambda_j}\\
&=&(1+2\lambda_n)L_n(x).
\end{eqnarray*}
  Iterating, with the convention that $K_0\equiv 0$, and summing up the equations we get the first formula;  $K_n(1)$ is obtained by setting $x=1$ an using $L_j(1)=1$ for $j=1,2,\cdots, n$. As a by-product formula, we note that
$
(\lambda_j+\lambda_n+1)c_{j,n}=a_{j,n}$ for  $j\leq n$.
 The second assertion is easily obtained by integration.

2) We quote from
\cite{Borowein-Erdelyi-Zhang-94} the recurrence formula
\begin{eqnarray*}
x^{\lambda_n+\lambda_{n-1}+1}\left(x^{-\lambda_n}L_n(x)
\right)'=\left(x^{\lambda_{n-1}+1}L_{n-1}(x) \right)'.
\end{eqnarray*}
Combining this with the first assertion and simplifying yields
\begin{eqnarray*}
\left(x^{-\lambda_n}L_n(x) \right)'&=&x^{-\lambda_n
-1}K_n(x)-(2\lambda_n+1)x^{-2\lambda_n-2}\int_0^x \!
s^{\lambda_n} K_n(s)\, ds.\\
&=&x^{-\lambda_n-1}K_{n-1}(x).
\end{eqnarray*}
Differentiating, we find $-\lambda_n K_n(x) +xK'_n(x)=(\lambda_n
+1)K_{n-1}+xK'_{n-1}$. This is nothing but a differential form of the
integro-difference equation. It remains to use the first assertion on the form
$K_n(x)=K_{n-1}(x)+(1+2\lambda_n)L_n(x)$ and the fact that
$L_n(1)=1$ to conclude.
\end{proof}
Our aim now is to outline a connection between self-reproducing kernels and the classical kernel systems.
\begin{Proposition} \label{reproduction}For each fixed $t>0$, the kernel system associated to $M_{n,t}$
is given by $g_{n,t}(u,v)=\frac{1}{t} \sum_{l=1}^{n} (1+2\lambda_l)L_l(\frac{u}{t})L_l(\frac{v}{t})$ for
$0< u,v\leq t$. Letting  $u\rightarrow t$ we get that
$k_n(t,s)=g_{n,t}(t,s)=\frac{1}{t}\sum_{l=1}^{n} (1+2\lambda_l)L_l(\frac{s}{t})$ for $0<s\leq t<\infty$.
\end{Proposition}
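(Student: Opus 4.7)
The plan is to build the reproducing kernel of $M_{n,t}$ from an orthonormal basis of M\"untz--Legendre polynomials, and then recognize the specialization $u=t$ as the formula for $K_n$ already derived in Proposition \ref{properties}.

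First I would recall the classical orthogonality relation
\[
\int_0^1 L_m(x)L_n(x)\, dx = \frac{\delta_{mn}}{2\lambda_n+1},
\]
which is established in \cite{Borwein-Erdelyi-95} and is essentially the defining property of the M\"untz--Legendre system; a one-line check using $L_1(x)=x^{\lambda_1}$ and the explicit coefficients $c_{j,k}$ in \eqref{eqn:M-K18} confirms the normalization. Rescaling $x=u/t$ via the change of variables yields
\[
\int_0^t L_m(u/t) L_n(u/t)\, du = \frac{t\,\delta_{mn}}{2\lambda_n+1},
\]
so that the functions $\psi_l(u):= \sqrt{(2\lambda_l+1)/t}\,L_l(u/t)$, $l=1,\dots,n$, form an orthonormal basis of $M_{n,t}$ inside $L^2[0,t]$.

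Next, since $M_{n,t}$ is finite dimensional, point evaluations are continuous on it, and the reproducing (self-reproducing) kernel is uniquely given by
\[
g_{n,t}(u,v) = \sum_{l=1}^n \psi_l(u)\psi_l(v) = \frac{1}{t}\sum_{l=1}^n (1+2\lambda_l) L_l(u/t) L_l(v/t),
\]
which is the first claim. This formula indeed satisfies $\int_0^t g_{n,t}(u,v) p(v)\,dv = p(u)$ for every $p\in M_{n,t}$, which is the standard criterion for being a kernel system.

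For the second part I would simply let $u\to t$ and use $L_l(1)=1$ to get $g_{n,t}(t,s)=\frac{1}{t}\sum_{l=1}^n(1+2\lambda_l)L_l(s/t)$, and then invoke the identity $K_n(x)=\sum_{j=1}^n(1+2\lambda_j)L_j(x)$ from Proposition~\ref{properties}(1) to conclude that this equals $t^{-1}K_n(s/t)=k_n(t,s)$. There is no real obstacle here; the only point that requires a bit of care is the normalization constant in the orthogonality relation, which determines the $(1+2\lambda_l)/t$ weights and which must match exactly with the coefficients appearing in Proposition~\ref{properties}.
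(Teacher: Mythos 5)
Your proposal is correct and follows essentially the same route as the paper: both construct the orthonormal basis $\sqrt{(1+2\lambda_l)/t}\,L_l(\cdot/t)$ of $M_{n,t}$ by rescaling (homogeneity) from the orthogonality $\int_0^1 L_j^2 = 1/(1+2\lambda_j)$, write the kernel system as the sum of products of these basis functions, and then let $u\to t$ using $L_l(1)=1$ together with the identity $K_n=\sum_{j}(1+2\lambda_j)L_j$ from Proposition \ref{properties} to identify $g_{n,t}(t,s)=k_n(t,s)$. No substantive difference.
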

\begin{proof}[Proof of Proposition \ref{reproduction}]
The kernel system is given by $g_{n,t}(u,v)=\sum_{k=1}^{n}q_{k,t}^n(u)q_{k,t}^n(v)$ where
$(q_{k,t}^n, n=1, \cdots, n)$ is an orthonormal sequence that generates  $M_{n,t}$. This is a reproducing kernel in the sense that,  for any $Q_t\in M_{n,t}$, we have
\[
Q_t(u)=\int_0^t\! g_{n,t}(u,v)Q_t(v) \, dv.
  \]
  Exploiting homogeneity, we easily check that the sequence $(q_{j,t}^n(x), x\in [0,t]; j=1,2,\cdots, n)$ defined by $q_{m,t}^n(u):=\sum_{k=1}^{m}c_{k,m}(t)u^{\lambda_{k}}=\sqrt{(1+2\lambda_m)/t}L_{m}(u/t)$ satisfies the requirements. We conclude using continuity and the fact that $L_n(1)=1$.
\end{proof}

\section{Connection to stationary Ornstein-Uhlenbeck processes}
We discuss here a question tackled in \cite{hm-2004}; this  consists
of determining a necessary and sufficient condition for the
existence of transforms of the form (\ref{Volterra-general}) or (\ref{Volterra})  with an infinite dimensional orthogonal complement
associated to
$\Lambda$. Let us recall some excerpts from \cite{jeulin-yor93} and \cite{Knight} on linear transforms of Brownian motions and stationarity. If two semimartingales $W$ and $B$ are related by
  \begin{eqnarray}\label{def-W}
 W_u=
 \left\{
\begin{array}{lll}
    B_1+\int_1^{e^u}\! \frac{dB_s}{\sqrt{s}} &\hbox{if}\quad u\geq 0;\\
    B_1-\int_{e^u}^1\! \frac{dB_s}{\sqrt{s}}& \hbox{if} \quad u\leq 0,
\end{array}
\right.
\end{eqnarray}
  and, equivalently, by
  \begin{equation}\label{def-B}
B_t=\int_{-\infty}^{\log{t}}\! e^{r/2}\, dW_r, \quad t>0,
 \end{equation}
then it is easily checked that $B$ is standard Brownian motion if and only if $W$ is a Brownian motion indexed by $\mathbb{R}$ i.e. $W$ is a  centered continuous Gaussian process with independent increments such that $\mathbb{E}\left [ (W_u-W_v)^2\right]=|u-v|$ for all $u$ and $v\in \mathbb{R}$. Furthermore, we have $\int_0^{\infty}\! \varphi(s)\, dB_s=\int_{\mathbb{R}}\! V\varphi (r)\, dW_r$ for $\varphi\in L^2(\mathbb{R}_+)$
 where the isometry $V:L^2(\mathbb{R}_+)\rightarrow  L^2(\mathbb{R})$ is defined by $V\varphi(u)=e^{u/2}\varphi(e^{u})$. We need to introduce the mapping  $U:C(\mathbb{R}_{+}, \mathbb{R})\rightarrow C(\mathbb{R}, \mathbb{R})$ which is specified by $U\varphi(u)=e^{-u/2}\varphi(e^{u})$, for $u\in \mathbb{R}$, and denote by $U^{-1}$ its inverse operator.
 Keeping in mind that $T$ is defined by (\ref{Volterra-general}) and setting  $\Theta=U\circ T$, we clearly have that  $\Theta(B)=S(W)$  where the transform $S$ is defined by
  \begin{equation}\label{MA}
S(W)_u=\int_{-\infty}^u\! \eta(u-v)\, dW_v, \quad u\in \mathbb{R},
\end{equation}
 with $\eta(u)={{\bf 1}}_{u>0}U\rho(u)$.
 Plainly, $T(B)$ is a Brownian motion if and only if  $\Theta(B)$ is a stationary Ornstein-Uhlenbeck process. Moreover, for some $f\in L^2_{loc}(\mathbb{R}_+)$  we have
 \[
 H_t(T(B))\perp \int_0^t\! f(s)\, dB_s, \quad t>0,
 \]
 if and only if
 \[
 \mathcal{H}(S(W))_u\perp \int_{-\infty}^u\!  V(f)(r)\, dW_r,\quad u\in \mathbb{R},
 \]
  where $\mathcal{H}(S(W))_u$ stand for the closed linear span of $\{S(W)_r, r\leq u\}$. The focus in the next result is on the m.a.r.$\,$of $\Theta_n(B):=(U\circ T_n(B)_u,
u\in\mathbb{R})$ in case when $n<\infty$.
\begin{Proposition}\label{moving-average-representation}
 Assume that $n<\infty$ and $T_n$ is the Goursat-Volterra transform of Theorem \ref{mg-thm}. The process $\Theta_n(B)$ has the m.a.r.$\,$(\ref{MA})
where $W$ is given by (\ref{def-W}) and $\eta:=\eta_{n}$ has the Fourier transform given by
\begin{equation}\label{product}
\hat{\eta}_n(\xi)=(1/2-i\xi)^{-1}\Pi_n(\xi), \quad \xi\in \mathbb{R},
\end{equation}
where
\begin{equation*}
 \Pi_n(\xi):=\prod_{j=1}^{n}
\frac{\xi-ip_j}{\xi+ip_j}
\end{equation*}
and $p_j=\frac12 +\lambda_j$ for $j=1, \cdots, n$.
\end{Proposition}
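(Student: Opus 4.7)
The plan is to derive the moving average representation by pushing the integrand through the isometries $V$ and $U$, and then compute the Fourier transform of the resulting kernel directly using the explicit form of $\rho_n$ from Theorem \ref{mg-thm}.

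First I would write $T_n(B)_t=\int_0^t\rho_n(t/v)\,dB_v$ and, via $V$, obtain $T_n(B)_t=\int_{-\infty}^{\log t}e^{r/2}\rho_n(te^{-r})\,dW_r$. Applying $U$ and substituting $s=u-r$ gives
\begin{equation*}
\Theta_n(B)_u=\int_{-\infty}^u e^{-(u-r)/2}\rho_n(e^{u-r})\,dW_r=\int_{-\infty}^u\eta_n(u-r)\,dW_r,
\end{equation*}
so $\eta_n(u)=\mathbf{1}_{u>0}e^{-u/2}\rho_n(e^u)$, as announced by the general identity $\eta=\mathbf{1}_{u>0}U\rho$.

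The core computation is the Fourier transform. Setting $z=i\xi-\frac12$ (so $\mathrm{Re}\,z=-\frac12$) and integrating by parts in $\hat\eta_n(\xi)=\int_0^\infty e^{zu}\rho_n(e^u)\,du$ using $\rho_n(1)=1$ and $\rho_n'(x)=-K_n(1/x)/x$, the boundary term at $\infty$ vanishes because $\rho_n(x)$ grows at most like $x^{1/2-\varepsilon}$ (since every $\lambda_j>-\tfrac12$) and is swamped by $e^{-u/2}$. The body integral becomes $\int_0^\infty e^{zu}K_n(e^{-u})\,du=\sum_{j=1}^n a_{j,n}/(\lambda_j-z)=\sum_j a_{j,n}/(p_j-i\xi)$, each term absolutely convergent thanks to $\lambda_j>-\tfrac12$. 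Collecting terms yields
\begin{equation*}
\hat\eta_n(\xi)=\frac{1}{\tfrac12-i\xi}\Bigl(1-\sum_{k=1}^n\frac{a_{k,n}}{p_k-i\xi}\Bigr).
\end{equation*}

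It remains to recognize the bracket as the Blaschke product $\Pi_n$. For this I reuse the algebraic identity established in the proof of Theorem \ref{mg-thm}: the polynomial $p_n(x)=\prod_{j}(x+\lambda_j+1)-\sum_k a_{k,n}\prod_{j\neq k}(x+\lambda_j+1)$ equals $\prod_j(x-\lambda_j)$, so dividing by $\prod_j(x+\lambda_j+1)$ gives
\begin{equation*}
1-\sum_{k=1}^n\frac{a_{k,n}}{x+\lambda_k+1}=\prod_{j=1}^n\frac{x-\lambda_j}{x+\lambda_j+1}.
\end{equation*}
Specializing to $x=-\tfrac12-i\xi$ turns $x+\lambda_k+1$ into $p_k-i\xi$ and $x-\lambda_j$ into $-(p_j+i\xi)$, so the right side becomes $\prod_j -(p_j+i\xi)/(p_j-i\xi)$, which a one-line manipulation (multiply numerator and denominator by $-i$) identifies with $\prod_j(\xi-ip_j)/(\xi+ip_j)=\Pi_n(\xi)$. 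The main subtlety, rather than any genuine obstacle, is controlling the boundary term in the integration by parts when some $\lambda_j$ lies in $(-\tfrac12,0]$; this is handled term by term from the closed form of $\rho_n$.
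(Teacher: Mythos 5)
Your proposal is correct and follows essentially the same route as the paper: obtain $\eta_n=\mathbf{1}_{u>0}U\rho_n$ from the isometry, compute $\hat\eta_n$ explicitly (the paper uses Fubini where you use integration by parts, but both land on $\hat\eta_n(\xi)=(1/2-i\xi)^{-1}\bigl(1-\sum_k a_{k,n}/(p_k-i\xi)\bigr)$), and then identify the bracket with $\Pi_n(\xi)$ via the same partial-fraction identity $1-\sum_k a_{k,n}/(x+\lambda_k+1)=\prod_j(x-\lambda_j)/(x+\lambda_j+1)$ at $x=-\tfrac12-i\xi$. Your sign bookkeeping in that last specialization is right, so nothing is missing.
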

\begin{proof}[Proof of Proposition \ref{moving-average-representation}]
Using Theorem \ref{mg-thm} and the recalls above, we see that formula (\ref{MA}) holds  with $\eta_n(t)={\bf 1}_{\{ t > 0 \}} U \circ \rho_n(t)$ and $\eta=\eta_n$. Note that $\eta_n\in L^1(\mathbb{R}_+)\cap L^2(\mathbb{R}_+)$. Now, for $\xi \in \mathbb{R}$,  we have
\begin{eqnarray*}
\hat{\eta}_n(\xi)&=&\int_0^{\infty}\! e^{i\xi t} e^{-t/2}\rho_n(e^t)\, dt\\
&=& \int_0^{\infty}\! e^{-(1/2-i\xi)t} \left(1-\int_1^{e^t}\! K_n(1/r)(1/r)\, dr \right)\, dt\\
&=& (1/2-i\xi )^{-1}-\int_{1}^{\infty}\! \left(\int_{\ln r}^{\infty}\! e^{-(1/2-i\xi)t} dt\right)K_n(1/r)(1/r)\, dr\\
&=&\frac{1}{1/2-i\xi} \left(1-\sum_{j=1}^n \frac{a_{j,n}}{p_j-i\xi} \right)
\end{eqnarray*}
where we used Fubini theorem for the third equality and condition (\ref{condition-1}) to justify the last equality.  The last term is now evaluated by using the obvious decomposition
\begin{equation*}
\prod_{j=1}^{n}\frac{x-\lambda_j}{x+\lambda_j+1}=1-\sum_{j=1}^{n}
\frac{a_{j,n}}{x+\lambda_j+1},\quad x\neq -\lambda_j-1, \quad
j=1,2,\cdots, n.
\end{equation*}
Note that the latter decomposition allows as well to resolve the system (\ref{system-0}).
\end{proof}

Our aim now is to look for the analogue of Proposition \ref{moving-average-representation} when $n=+\infty$. Observe that for the transform (\ref{MA}) to be well defined we merely need $\eta\in L^2(\mathbb{R}_+)$ and we can even take $\eta\in L^2_{\mathbb{C}}(\mathbb{R}_+)$. Of course, we need then to work with the Fourier-Plancherel transform instead of the Fourier transform. We recall that this is connected to the Hardy class $H^2_+$ of holomorphic functions $H$ in the upper half-plane $\mathbb{C}_+=\{z\in \mathbb{C}, \hbox{Im}(z)>0\}$ such that $\sup_{b>0}\int_{\mathbb{R}}\! |H(a+ib)|\,da<\infty$, see \cite{DM}.
 We gather in the following result some well known results which are mostly taken from \cite{hhm-00} and \cite{jeulin-yor93}; for completeness a full proof will be given.
\begin{Theorem}\label{Infinite-order-Kernel} Assuming that  $W$  is a Brownian motion indexed by $\mathbb{R}$
and $B$ is a standard Brownian motion satisfying (\ref{def-W}) and (\ref{def-B}) then the following assertions are equivalent.
\begin{itemize}
\item[(1)] $\Lambda$  satisfies (\ref{MS}).
\item[(2)] There exists a transform $S$ of the form (\ref{MA}) associated to $\Lambda$ such that $(S(W)_u, u\in \mathbb{R})$ is an Ornstein-Uhlenbeck process and
\begin{equation}\label{Loss-inf-0}
 \mathcal{H}_u(W)=\mathcal{H}_u(S(W))\oplus \hbox{Span}\{ \int_{-\infty}^{u}\! e^{p_j u}\,  dW_u, j=1,2, \cdots\}
 \end{equation}
 for all $u\in \mathbb{R}$  where   $\mathcal{H}(S(W))_u$ stand for the closed linear span of $\{S(W)_r, r\leq u\}$.
 \item[(3)] There exists a transform $T$ of the form (\ref{Volterra-general})  such that $(T(B)_t, t\geq 0)$ is a standard Brownian motion and
\begin{equation}\label{Loss-inf}
H_t(B)=H_t(T(B))\oplus \hbox{Span} \{\int_0^t\! s^{\lambda_j}\, dB_s, j=1,2, \cdots \}
\end{equation}
 for all $t>0$.
\end{itemize}
\end{Theorem}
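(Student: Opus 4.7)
The plan is to transfer everything to the two-sided Brownian motion $W$ via the isometries $V$ and $U$ recalled just before the theorem, reducing the problem to an analytic statement about the Fourier transform of a causal kernel in $L^2(\mathbb{R}_+)$; the heart of the argument is then Blaschke's factorization theorem in the Hardy space $H^2(\mathbb{C}_+)$ of the upper half-plane.

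The equivalence (2) $\Leftrightarrow$ (3) follows directly from the correspondence $\Theta := U \circ T$, $\Theta(B) = S(W)$ with $\eta(w) = \mathbf{1}_{w > 0} U\rho(w)$, together with the fact that $T(B)$ is a standard Brownian motion if and only if $S(W)$ is a stationary Ornstein--Uhlenbeck process. A direct computation gives $V(s \mapsto s^{\lambda_j})(u) = e^{u/2} e^{u \lambda_j} = e^{p_j u}$, so that $\int_0^t s^{\lambda_j}\,dB_s$ corresponds under $V$ to $\int_{-\infty}^{\log t} e^{p_j r}\,dW_r$; since $V$ sends $H_t(B)$ isometrically onto $\mathcal{H}_{\log t}(W)$, the decompositions (\ref{Loss-inf}) and (\ref{Loss-inf-0}) translate into each other.

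For (2) $\Rightarrow$ (1), the unit-variance stationary OU requirement forces the spectral density $|\hat\eta(\xi)|^2 = (1/4 + \xi^2)^{-1}$ on $\mathbb{R}$, while causality ($\eta$ supported on $\mathbb{R}_+$) places $\hat\eta$ in $H^2(\mathbb{C}_+)$. One can therefore write $\hat\eta(\xi) = (1/2 - i\xi)^{-1}\Pi(\xi)$ with $\Pi$ an inner function on $\mathbb{C}_+$. The orthogonality of $\mathcal{H}_u(S(W))$ to each $\int_{-\infty}^u e^{p_j r}\,dW_r$, transported via It\^o's isometry and stationarity, reduces to $\int_0^\infty \eta(w) e^{-p_j w}\,dw = \hat\eta(ip_j) = 0$, hence $\Pi(ip_j) = 0$ for every $j \geq 1$. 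The existence of a non-zero inner function on $\mathbb{C}_+$ vanishing on $\{ip_j : j \geq 1\}$ is, by the converse Blaschke theorem, equivalent to $\sum_j p_j/(1 + p_j^2) < \infty$, i.e.\ (\ref{MS}).

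Conversely, for (1) $\Rightarrow$ (2), under (\ref{MS}) the infinite Blaschke product $\Pi(\xi) = \prod_{j \geq 1}(\xi - ip_j)/(\xi + ip_j)$ converges normally in $\mathbb{C}_+$ to an inner function, and $\hat\eta(\xi) := (1/2 - i\xi)^{-1}\Pi(\xi)$ is then the $L^2(\mathbb{R})$-limit of the finite-$n$ Fourier transforms $\hat\eta_n$ of Proposition \ref{moving-average-representation}; the associated moving average $S(W)$ is a stationary OU process, the identity $\hat\eta(ip_j) = 0$ yields the orthogonality, and completeness of (\ref{Loss-inf-0}) follows by passing to the limit in the nested finite-$n$ decompositions transferred from Theorem \ref{mg-thm}. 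The hard part is the Hardy-space step in (2) $\Rightarrow$ (1), which requires the causal inner--outer factorization of $\hat\eta$ with outer factor uniquely prescribed by the OU spectral density, together with the converse Blaschke theorem; the completeness claim in (1) $\Rightarrow$ (2) similarly relies on the Beurling-type description of shift-invariant subspaces of $H^2(\mathbb{C}_+)$.
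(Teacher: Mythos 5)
Your reduction of (2)$\Leftrightarrow$(3) via the isometries $U$ and $V$ is exactly the paper's argument, and your (2)$\Rightarrow$(1) is correct but takes a genuinely different route: the paper argues by contrapositive, invoking the M\"untz--Szasz theorem to conclude that if (\ref{MS}) fails then the exponentials $e^{p_j\cdot}$ are total in $L^2(-\infty,u]$, so no nontrivial orthogonal complement containing an Ornstein--Uhlenbeck process can exist; you instead place $\hat\eta$ in $H^2(\mathbb{C}_+)$, factor out the outer function $(1/2-i\xi)^{-1}$ forced by the OU spectral density, read off $\hat\eta(ip_j)=0$ from the orthogonality, and invoke the Blaschke zero-set condition. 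That is more self-contained (it effectively reproves the needed half of M\"untz--Szasz) and is a legitimate alternative.

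The genuine gap is in your (1)$\Rightarrow$(2) construction. Under (\ref{MS}) alone the $p_j$ may be unbounded (e.g.\ $p_j=j^2$), and then the product $\prod_{j\ge 1}\frac{\xi-ip_j}{\xi+ip_j}$ as you wrote it does \emph{not} converge, normally or otherwise: each factor tends to $-1$ along such a subsequence, so the partial products oscillate. This is precisely why the paper's kernel (\ref{Infinite-product-Fourier}) carries the unimodular convergence factors $\frac{|1-p_j|}{1-p_j}$ (the standard half-plane Blaschke normalization); without them your function $\Pi$, and hence your $\hat\eta$, need not exist, and your claim that $\hat\eta$ is the $L^2(\mathbb{R})$-limit of the finite-order $\hat\eta_n$ of Proposition \ref{moving-average-representation} fails for the same reason (that convergence is what Theorem \ref{ifinite-semimartingale} gets only under the stronger hypothesis $\sum_j p_j<\infty$). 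The fix is routine --- insert the signs $\mathrm{sign}(1-p_j)$, which change neither the modulus on $\mathbb{R}$ nor the zeros at $ip_j$, and then argue as the paper does (OU property from the spectral density, orthogonality from $H(ip_j)=0$) --- but as stated the construction is wrong. Relatedly, your completeness step for (\ref{Loss-inf-0}) is only gestured at: ``passing to the limit in the nested finite-$n$ decompositions'' presupposes the very convergence that breaks down, so if you want completeness without citing \cite{hhm-00} you must actually carry out the Beurling/model-space argument, identifying $\mathcal{H}_u(S(W))^{\perp}$ inside $\mathcal{H}_u(W)$ with $H^2\ominus\Pi H^2$, which for a Blaschke product with simple zeros $ip_j$ is the closed span of the reproducing kernels $(\xi+ip_j)^{-1}$, i.e.\ of the integrands $e^{-p_j x}{\bf 1}_{x>0}$ corresponding to $\int_{-\infty}^{u}e^{p_j r}\,dW_r$.
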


\begin{proof}[Proof of Theorem \ref{Infinite-order-Kernel}]

$(1)\Leftrightarrow (2)$ Assuming that equation (\ref{MS}) is not satisfied then by M\"untz-Szasz theorem, see e.g.$\,$\cite{Borwein-Erdelyi-95}, the sequence $(f_k)$ is complete in $L^2[0,t]$ for all $t>0$. It follows that the sequence $\left( e^{p_j x}, j=1,2, \cdots \right)$ is total in $L^2(-\infty, a]$ for all $a$ real.
 Hence $ H_u(W)=\hbox{Span}\{ \int_{-\infty}^{u}\! e^{p_j s}\, dW_s, j=1,2, \cdots\}$
  which shows that it is not possible to construct a transform $S$ satisfying (\ref{Loss-inf-0}) such that $S(W)$ is an Ornstein-Uhlenbeck process. Conversely,  condition (\ref{MS})  ensures the convergence of the infinite product (\ref{Infinite-product-Fourier}).
   It is seen in Theorem 2 of \cite{hhm-00}, see also (\cite{jeulin-yor93}, p.$\,$60), that under the condition (\ref{MS}) the function $H:\mathbb{R}\rightarrow \mathbb{C}$ defined by
\begin{equation}\label{Infinite-product-Fourier}
H(\xi)=\frac{1}{1/2-i\xi}\prod_{j=1}^{\infty}
\frac{\xi-ip_j}{\xi+ip_j}\frac{|1-p_j|}{1-p_j}
\end{equation}
is the Fourier-Plancherel transform of a function $\eta_{\infty}\in L^2_{\mathbb{C}}(\mathbb{R}_+, dx)$, where $dx$ is the Lebesgue measure; this follows from the fact that $H\in H^2_+$.
  Note that $\eta_{\infty}$ is real-valued since  $\overline{H(\xi)}=H(-\overline{\xi})$ for $\xi\in \mathbb{R}$. Let $S_{\infty}$ be defined by (\ref{MA})  with $\eta=\eta_{\infty}$. The process $(S_{\infty}(W)_u, u\in \mathbb{R})$ is a continuous stationary Gaussian process with spectral measure
$(2\pi)^{-1}|H(\xi)|^2=(2\pi)^{-1}(\xi^2+1/4)^{-1}$ and covariance function
\begin{eqnarray*}
\mathbb{E}\left[ S_{\infty}(W)_{u}S_{\infty}(W)_{v}\right]&=&\int_{-\infty}^{u \wedge v}\! \eta_{\infty}(u-r)\eta_{\infty}(v-r)\, dr\\
&=&\int_0^{\infty}\! \eta_{\infty}(r)\eta_{\infty}(u\vee v-u\wedge v-r)\, dr\\
&=&\frac{1}{2\pi}\int_{-\infty}^{+\infty}\! |\hat{\eta}_{\infty}(\xi)|^2 e^{i(u\vee v-u\wedge v)\xi}\, d\xi\\
&=&\frac{2}{\pi}\int_{-\infty}^{+\infty}\! e^{i(u\vee v-u\wedge v)\xi}\frac{d\xi}{1+4\xi^2}\\
&=&e^{-\frac12 |u-v|}
\end{eqnarray*}
for $u$ and $v\in \mathbb{R}$. Thus, $S_{\infty}(W)$ is a stationary Ornstein-Uhlenbeck process. Now, for all $u<v$ and $j=1$, $2$, $\cdots$, $S_{\infty}(W)_u$ is independent of $\int_{-\infty}^{v}\! e^{p_j r}\, dW_r$ since
\begin{eqnarray*}
\mathbb{E}\left[S_{\infty}(W)_u \int_{-\infty}^{v}\! e^{p_jr}\, dW_r \right]&=& \int_{-\infty}^{u}\! e^{(\lambda_j+1/2) r} \eta_{\infty}(u-r)\, dr\\
&=&e^{p_j u}\int_0^{\infty}\! e^{-p_js}\eta_{\infty}(s)\, ds=0
\end{eqnarray*}
where the last equality is obtained from the fact $ip_j$ is a zero point of $H$.

\noindent $(2)\Leftrightarrow (3)$   Assuming (2), we can set $\rho_{\infty}=U^{-1}\eta_{\infty}$, where $\eta_{\infty}$ is as above, and define $B$ by (\ref{def-B}). Since $\eta_{\infty}\in L^2(\mathbb{R}_+)$,
 we clearly have  that $\rho_{\infty}\in \mathcal{M}$. Let us now define $T_{\infty}$ by (\ref{Volterra-general}) where $\rho_{\infty}$ and $B$ are as prescribed above.  Clearly, $T_{\infty}(B)$ is a standard Brownian motion. Furthermore, we have
 \[H_t(T_{\infty}(B))\perp \int_0^t\! u^q\, dB_u
 \]
 for all $t>0$, with $q>-1/2$, if and only if
 \[\mathcal{H}(S_{\infty}(W))_u\perp \int_{-\infty}^u\! e^{pr}\, dW_r
 \]
 for all $u\in \mathbb{R}$,
  with $p=q+1/2$. Conversely, by reversing the steps we see that $(3)$ implies $(2)$.
  \end{proof}
\begin{Remark} When we outlined the connection with stationary processes, we could have considered  $W^{(\alpha)}$ satisfying $\int_0^{\infty}\! \varphi(s)\, dB_s=\int_{\mathbb{R}}\! V^{(\alpha)}\varphi (r)\, dW_r^{(\alpha)}$ for $\varphi\in L^2(\mathbb{R}_+)$, for some $\alpha>0$, where the isometry $V^{(\alpha)}:L^2(\mathbb{R}_+)\rightarrow L^2(\mathbb{R})$  is defined by $V\varphi(u)=\sqrt{\alpha}e^{\alpha u/2}\varphi(e^{\alpha  u})$, i.e. $dW^{(\alpha)}=\alpha^{-1/2} e^{-\alpha u/2}dB(e^{\alpha u})$ with $W^{(\alpha)}_0=B_1$. But, we need to use $U^{(\alpha)}(\phi)(u)=\alpha^{-1/2} e^{\alpha u/2} \phi(e^{\alpha u})$ instead of $U$. The authors used this transformation with $\alpha=2$ in \cite{hhm-00} and \cite{hm-2004}. Of course, the conclusions are the same up to working with $p_j=2\lambda_j+1$ instead of $p_j=\lambda_j+1/2$.
\end{Remark}
\begin{Theorem} \label{ifinite-semimartingale} There exists a transform of the form (\ref{Volterra-general}) such that
 $(T_{\infty}(B)_t, t\geq 0)$ is a Brownian motion satisfying (\ref{Loss-inf}) and is a semimartingale in $\mathcal{F}^B$ if and only if $(\lambda_k)$ is bounded and satisfies the M\"untz-Szasz condition (\ref{MS}) i.e.$\,$$\sum_{j=1}^{\infty}p_j<\infty$.
\end{Theorem}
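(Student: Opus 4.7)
The strategy is to reduce the semimartingale condition for $T_{\infty}(B)$ to the $L^2[0,1]$-integrability of an infinite-order Volterra kernel $K_{\infty}$, and then to evaluate that condition via Parseval in the orthogonal M\"untz--Legendre basis; recall that $\|L_j\|_{L^2[0,1]}^2=1/(2p_j)$, as implicit in Proposition~2.4 through the normalization $\sqrt{2p_m}\,L_m(u)$ (see also \cite{Borwein-Erdelyi-95}). The elementary equivalence between ``$(\lambda_k)$ bounded plus (\ref{MS})'' and ``$\sum_j p_j<\infty$'' is immediate: either hypothesis forces $p_j\to 0$, after which $p_j/(1+p_j^2)$ and $p_j$ are comparable.

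\emph{Sufficiency.} Assume $\sum p_j<\infty$. By Proposition~\ref{properties}.1, $K_n=2\sum_{j=1}^n p_j L_j$, so the M--L orthogonality gives $\|K_n\|_{L^2[0,1]}^2=2\sum_{j=1}^n p_j$, and $(K_n)$ is Cauchy in $L^2[0,1]$, converging to $K_{\infty}:=2\sum_j p_j L_j$. Set $k_{\infty}(t,s):=t^{-1}K_{\infty}(s/t)\mathbf{1}_{s\le t}$ and $\rho_{\infty}(x):=1-\int_1^x K_{\infty}(1/r)\,dr/r$. Passing to the $L^2$-limit in the self-reproduction identity (\ref{eqn:K10}) for $k_n$ (Cauchy--Schwarz on the inner integral after a change of variable), $k_{\infty}$ is self-reproducing, hence by Theorem~6.1 of \cite{fwy99} the process $T_{\infty}(B)_t:=B_t-\int_0^t\!\int_0^s k_{\infty}(s,v)\,dB_v\,ds$ is a Brownian motion. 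Expanding $u^{\lambda_j}$ in the M--L basis and using $L_j(1)=\sum_{k\le j}c_{k,j}=1$ from (\ref{eqn:M-K18}) yields $\int_0^1 K_{\infty}(u)u^{\lambda_j}\,du=1$ for every $j$, which gives (\ref{Loss-inf}) through the integral-equation argument of Theorem~\ref{mg-thm}. Finally, $\rho_{\infty}$ is in the Jeulin--Yor semimartingale form with $c=1$ and $g_{\infty}(y):=-K_{\infty}(1/y)$, and $g_{\infty}\in\mathcal{M}$ because $\int_1^{\infty} g_{\infty}^2(y)\, y^{-2}\,dy=\int_0^1 K_{\infty}^2(u)\,du<\infty$ (the bound $\lambda_j>-1/2$ also ensures $\rho_{\infty}\in\mathcal{M}$ so that the transform is well defined).

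\emph{Necessity.} Suppose such a $T$ exists. Theorem~\ref{Infinite-order-Kernel} already yields (\ref{MS}). The Jeulin--Yor characterization recalled in the introduction provides $c\in\mathbb{R}$ and $g\in\mathcal{M}$ with $\rho(x)=c+\int_1^x g(y)/y\,dy$; writing $T(B)=cB+A$ with $A$ of bounded variation and comparing quadratic variations ($[T(B)]_t=c^2 t=t$) forces $c^2=1$, and one may take $c=1$ (replacing $T$ by $-T$ if necessary). Setting $K(u):=-g(1/u)\in L^2[0,1]$, we obtain $T(B)_t=B_t-\int_0^t\!\int_0^s s^{-1}K(v/s)\,dB_v\,ds$, and the orthogonality expressed in (\ref{Loss-inf}) translates, exactly as in the proof of Theorem~\ref{mg-thm}, into $\int_0^1 K(u)u^{\lambda_j}\,du=1$ for every $j$. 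Let $P$ denote the orthogonal projection of $L^2[0,1]$ onto $H:=\overline{M_{\infty,1}}$. Using $L_j=\sum_{k\le j}c_{k,j}u^{\lambda_k}$ and $L_j(1)=1$, we compute $\langle K,L_j\rangle=\sum_{k\le j}c_{k,j}=1$, so the M--L expansion of $PK$ is $PK=2\sum_j p_j L_j$ and Parseval gives $\|PK\|_{L^2}^2=2\sum_j p_j$. Since $K\in L^2[0,1]$ forces $PK\in L^2[0,1]$, we conclude $\sum_j p_j<\infty$. The crux of the argument is this Parseval identification; the delicate ancillary points are the passage of self-reproduction through the $L^2$-limit, the verification that $\rho_{\infty}\in\mathcal{M}$, and the pinning down of $c=1$.
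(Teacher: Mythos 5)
Your necessity argument is correct and is actually cleaner than the paper's: from the semimartingale form $\rho(x)=c+\int_1^x g(y)y^{-1}\,dy$ with $c=\pm1$ you extract $K\in L^2[0,1]$ with $\int_0^1 K(u)u^{\lambda_j}\,du=1$, and then $\langle K,L_j\rangle=L_j(1)=1$ together with $\|L_j\|_{L^2[0,1]}^2=1/(2p_j)$ gives $2\sum_j p_j=\|PK\|^2\le\|K\|^2<\infty$ in one stroke; the paper instead treats unbounded $(\lambda_k)$ by a separate Cauchy--Schwarz estimate on $\rho'$ and invokes Theorem \ref{Infinite-order-Kernel} for (\ref{MS}). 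The construction of $K_\infty$, the passage of the self-reproduction identity (\ref{eqn:K10}) through the $L^2$-limit to get the Brownian property, the verification $g_\infty\in\mathcal{M}$, and the semimartingale conclusion via Proposition 15 of \cite{jeulin-yor93} are all in order and parallel to the paper.

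The sufficiency, however, has a genuine gap at the decomposition (\ref{Loss-inf}). The identity $\int_0^1 K_\infty(u)u^{\lambda_j}\,du=1$ only gives the inclusion $G_t(\lambda_1,\lambda_2,\cdots;B)\subseteq H_t(B)\ominus H_t(T_\infty(B))$; for the equality you must also show the complement is no larger, i.e.\ that every $f\in L^2[0,t]$ solving $f(s)=\int_0^s k_\infty(s,v)f(v)\,dv$ lies in the $L^2$-closure of $M_{\infty,t}$. The ``integral-equation argument of Theorem \ref{mg-thm}'' that you invoke for this is intrinsically finite-order: it reduces the equation, for a Goursat kernel $\sum_{j=1}^n\varphi_j(s)f_j(v)$, to an $n$-th order linear ODE whose solution space is spanned by $u^{\lambda_1},\dots,u^{\lambda_n}$, and this reduction has no meaning for the infinite sum $K_\infty=2\sum_j p_jL_j$. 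That reverse inclusion is exactly the hard part of the infinite-dimensional statement. The paper obtains it by identifying the limiting transform with the one of Theorem \ref{Infinite-order-Kernel}: it shows via Plancherel and the locally uniform convergence of the finite Blaschke products $\Pi_n$ that $\hat{\eta}=H$, hence $\eta=\eta_\infty$, and the identification of the orthogonal complement for $\eta_\infty$ rests on the Hardy-space/inner-function analysis of \cite{hhm-00} (the complement of the moving average is the model space $H^2\ominus\Pi_\infty H^2$, spanned by the Cauchy kernels at the zeros $ip_j$). To close your argument you must either carry out this identification (your $L^2$-convergence of $K_n$ and hence of $\eta_n$ makes it available exactly as in the paper) or give an independent proof that the solution set of the infinite-order integral equation is no bigger than $\overline{M_{\infty,t}}$; as written, the equality in (\ref{Loss-inf}) is asserted but not proved.
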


\begin{proof}[Proof of Theorem \ref{ifinite-semimartingale}]
 By Theorem \ref{Infinite-order-Kernel} there exists a  transform of the form (\ref{Volterra-general}) such that (\ref{Loss-inf}) holds and $(T(B), t\geq 0)$ is a Brownian motion if and only if  condition (\ref{MS}) is satisfied. Assuming that (\ref{MS}) is satisfied, let us check that the semimartingale property cannot hold  if there exists a subsequence $(n_k)_{k\in \mathbb{N}}$ such that  $\lambda_{n_k} \rightarrow \infty$. For this, let us quote an argument, from \cite{Erraoui-Ouknine-2008} and \cite{hm-2004}, to show that we necessarily have  $\int_1^{\infty}\! \rho'(u)^2\,  du=+\infty$ in this case. Using the fact that $T(B)_1$ is independent of $\int_0^1\! u^{\lambda_{n_k}}\, dB_u$ and a change of variables, we obtain  $\int_0^1\! u^{\lambda_{n_k}} \rho(1/u)\, du=\int_1^{\infty}\! u^{-(\lambda_{n_k}+2)}\rho(u)\, du=0$. This, when combined with integration by parts, yields
\begin{eqnarray*}
\int_1^{\infty}\! u^{-(\lambda_{n_k}+1)}\rho'(u)\, du&=&\int_1^{\infty}\! u^{-(\lambda_{n_k}+1)}\rho'(u)\, du-(1+\lambda_{n_k})\int_1^{\infty}\! u^{-(\lambda_{n_k}+2)}\rho(u)\, du\\
&=& \left[\rho(u) u^{-(\lambda_{n_k}+1)} \right]_{1}^{\infty}=-\rho(1).
\end{eqnarray*}
  By using the Cauchy-Schwartz inequality, we obtain $(1+2\lambda_{n_k})|\rho(1)|^2 \leq \int_1^{\infty}\! \rho'(u)^2\,  du$ which implies that $\int_1^{\infty}\! \rho'(u)^2 du=\int_0^1\! \rho'(1/v)^2 v^{-2}\, dv=+\infty$. If $T(B)$ were an $\mathcal{F}_t^{B}$ semimartingale then, by applying Theorem 6.5 of \cite{Knight}, we would have the existence of $g$ such that $\rho(t)=c+\int_1^{.}\! y^{-1}g(y)\, dy$ with $g\in \mathcal{M}$ and $c\neq 0$. But then  $\rho'(y)=g(y)/y$, $y>1$,  and we should have $\int_0^1\! g^2(1/v)\, dv=\int_0^1\! \rho'(1/v)^2v^{-2}\, dv<\infty$. This contradicts the fact that $\int_1^{\infty}\! \rho'(v)^2\, dv=+\infty$.

  Let us now examine the case where $(\lambda_k)$ satisfies (\ref{MS}) and is bounded. i.e. $\sum_1^{\infty} (1+2\lambda_j)<\infty$. We shall first show that $K_n$ converges as $n\rightarrow \infty$ in $L^2[0,1]$. Using Proposition \ref{properties}, for any positive integers $n>m$, we can write
  \begin{eqnarray*}
  \int_0^1\! \left(K_n(u)-K_m(u) \right)^2\, du&=&\int_0^1\! \left(\sum_{j=m+1}^n(1+2\lambda_j)L_j(u) \right)^2\, du\\
  &=&\sum_{j=m}^n(1+2\lambda_j)\sum_{k=m+1}^n(1+2\lambda_k)\int_0^1\! L_j(u)L_k(u)\, du\\
  &=&\sum_{j=m+1}^n(1+2\lambda_j)\rightarrow 0, \quad \hbox{as} \; m,n\rightarrow \infty,
  \end{eqnarray*}
where we have used the fact that $L_1$, $L_2$, $\cdots$, are orthogonal and $\int_0^1\! L_j^2(r)\, dr=1/(1+2\lambda_j)$ for $j=1, \cdots, n$.
  This shows that $(K_n)$ is a Cauchy sequence in $L^2[0,1]$. Hence, it must converge to a limit which we denote by $K$.  With $\rho_n(.)=1-\int_1^{.}\! K_n(1/r) r^{-1}\, dr $, $n=1,2 \cdots$, let us show that
  $\rho_n(1/.) \rightarrow \rho(1/.)$ in $L^2[0,1]$ where
  \begin{equation}\label{expression-rho}\rho=1-\int_1^{\infty}\! K(1/r)r^{-1}\, dr.
   \end{equation}
   To this end, we quote from \cite{CDM-Y-89}  the following variant of Hardy inequality. For any $g\in L^2[0,1]$, we have
  \[
  \int_0^1\! \left(\int_u^1\! g(r)r^{-1}dr \right)^2\, du\leq 4\int_0^1\! g^2(u)\, du.
  \]
  Now, we can write
  \begin{eqnarray*}
  \int_{0}^1\! \left(\rho_n(1/v)-\rho(1/v) \right)^2\, dv&=&\int_{0}^1\! \left( \int_{1}^{1/v}\! (K_n(1/z)-K(1/z))z^{-1}\, dz \right)^2\, dv\\
  &=& \int_0^1\! \left(\int_v^{1}\! (K_n(r)-K(r)) r^{-1}\, dr \right)^2\, dv\\
  &\leq &  4\int_0^{1}\! (K_n(u)-K(u))^2\,  du  \rightarrow 0\quad \hbox{a} \; n\rightarrow \infty,
  \end{eqnarray*}
  which is our claim. It follows that
  \[
  T_n(B)_t=\int_0^t\! \rho_n(t/s)\, dB_s\rightarrow \int_0^t\! \rho(t/s)\,dB_s=:T(B)_t \quad \hbox{in}\; L^2(\Omega, \mathcal{F}, \mathbb{P}).
  \]
   Similar arguments show that $\eta_n\rightarrow \eta$ in $L^2(\mathbb{R}_+)$, with $\eta(t)={\bf 1}_{t>0}U\circ \rho (t)$, and
$S_n(W)_u\rightarrow \int_{-\infty}^u\! \eta(u-v)\,dW_v=:S(W)_u $ in  $L^2(\Omega, \mathcal{F}, \mathbb{P})$. We need to show that $\eta=\eta_{\infty}$ where $\eta_{\infty}$ is  the Fourier inverse of the function $H$ defined by (\ref{Infinite-product-Fourier}). Applying Plancherel Theorem, with $\hat{\eta}_n$ prescribed by (\ref{product}), we see that $\hat{\eta}_n \rightarrow \hat{\eta}$ in $L^2_{\mathbb{C}}(\mathbb{R})$. But by Theorem 13.12 in \cite{mashreghi}, we know that
 \[
 \Pi_n(\xi) \rightarrow \Pi_{\infty}(\xi):= \prod_{j=1}^{\infty}
\frac{\xi-ip_j}{\xi+ip_j} \quad \hbox{as}\quad n\rightarrow \infty,
\]
 uniformly on compact subsets of $\mathbb{R}\setminus\{0\}$. It follows that  $\hat{\eta}_n\rightarrow H$, as $n\rightarrow \infty$, uniformly on compact subsets of $\mathbb{R}\setminus\{0\}$. We conclude that necessarily $\hat{\eta}=H$ a.e. which implies that $\eta=\eta_{\infty}$ a.e.. It follows from the proof of Theorem \ref{Infinite-order-Kernel} that $S_{\infty}(W)$ is an Ornstein-Uhlenbeck process which implies that $T_{\infty}(B)$ is a standard Brownian motion. Due to the fact that  $\rho$ is given by (\ref{expression-rho}), Proposition 15 on p.$\,$69 of \cite{jeulin-yor93} shows that $T(B)$ is a semimartingale with respect to $\mathcal{F}^{B}$.
\end{proof}

\begin{Definition} A M\"untz transform is a transform $T$ of the form (\ref{Volterra-general}) such that  $(T(B)_t, t\geq 0)$ is a standard Brownian motion for any Brownian motion $(B_t, t\geq 0)$ and
  the orthogonal decomposition
$H_t(B)=H_t(T(B))\oplus G_t(\lambda_1,\cdots, \lambda_n; B)$, for some sequence of reals $-1/2<\lambda_1, \cdots, \lambda_n$ and  $n\in\{ 1, 2, \cdots\}$, holds for all $t>0$. We call $n$ the order of the transform.  The corresponding kernel $\rho(./.)$ (or $k_n$ in the semimartingale case)  is called a M\"untz kernel of order $n$.
\end{Definition}

\begin{Remark}
As a by-product of the discussion for the order to be infinite we
mention the following result. Let $\varphi$ be a $C^{\infty}([0,1])$
function satisfying $|\varphi^{(m)}| \le M$  for all $m$, where $M$ is some positive constant.
Then $\varphi$ is a solution to the integral equation $\varphi(u) =
\int_0^1\! \varphi(uv) \varphi(v)\, dv$, defined on $[0,1]$, if and only if
$\varphi(.)=k_n(1,.)=K_n(.)$ where $\lambda_j=j$ for $j\geq 0$ and $n$ is
some finite positive integer.
\end{Remark}
\begin{Remark}\label{Ergodic-thm} For $n\in \{1, 2, \cdots \}$,
$k_n$ and $T_n$ as above, introduce the notations
$T^{(0)}_n=Id$, $T^{(1)}_n=T_n$ and $T^{(m)}_n =
T^{(m-1)}_n \circ T_n$, for $m \geq 2$, where $\circ$
stands for the composition rule for the iterated transforms. We clearly have for $m$ positive integer
\[
\cdots {\mathcal F}_t^{T_n^{(m+1)}(B)}\varsubsetneq {\mathcal F}_t^{T_n^{(m)}(B)}\varsubsetneq \cdots {\mathcal F}_t^{T_n}(B)\varsubsetneq {\mathcal F}_t(B).
\]
Furthermore, since
we are in the homogeneous case,  we can show that the
decomposition
\[
{\mathcal F}_t^{B} = \bigotimes_{k = 1}^{\infty}
\sigma \left( \int_0^t\! u^{\lambda_j} \, dT^{(k)}_n(B)_u, 1\leq
j\leq n\right)
\]
 holds true. Here,
by $\mathcal{F}\otimes \mathcal{G}$, for two $\sigma$-algebras $\mathcal{F}$ and $\mathcal{G}$,  we mean $\mathcal{F}\vee
\mathcal{G}$ with independence between $\mathcal{F}$ and
$\mathcal{G}$. It follows that  M\"untz transforms
are strongly mixing and ergodic. We also refer to
\cite{jeulin-yor93} for a proof of this, in a more general framework, which uses the connection to stationarity.
\end{Remark}
\begin{example}
For $r>0$, let us set $\lambda_j=(j^{-r}-1)/2$ and so $p_j=j^{-r}/2$, $j=1,2,\cdots$.  For $n$ positive integer, we obtain
\begin{equation*}
a_{k,n}=\frac{2}{ k^{r}}\prod_{j=1, j\neq k}^n
\frac{j^r+k^r}{j^r-k^r}, \quad k=1,2,\cdots n.
\end{equation*}
 The hyperharmonic series  $\sum_1^{\infty}(1+2\lambda_j)=\sum_{1}^{\infty} j^{-r}$ converges  if and only if $r>1$ which, by Theorem \ref{ifinite-semimartingale}, is the necessary and sufficient condition for  the existence of an associated M\"untz transform of infinite order. If $r>1$ then
\begin{eqnarray*}
H(\xi)= \frac{1}{1/2-i \xi}\prod_{k=1}^{\infty}
\frac{k^r-i/(2\xi)}{k^r+i/(2\xi)}, \quad \xi\in \mathbb{R}.
\end{eqnarray*}
If furthermore $r$ is a an integer then
\begin{equation*}
H(\xi)=-
 (1/2-i\xi)^{-1} \prod_{j=1}^{2r} \Gamma(-(i/(2\xi))^{1/r}
\omega_{2r} ^j)^{(-1)^{j+1}}
\end{equation*}
 where we have used the relationship
 \begin{equation*}
\prod_{j \geq 1} \frac{j^r-z^r}{j^r+z^r} =
-\prod_{j=1}^{2r}\Gamma(-z\omega_{2r} ^j)^{(-1)^{j+1}},
\end{equation*}
 with $\omega_{2r}=\exp(\pi i/r)$, which is valid for $z\notin \{0, 1, 2, \cdots\}$ and is found in (\cite{BBR-04}, pp.$\,$6-7). Since the residue of $\Gamma(z)$ at $z=-k$ is $(-1)^{k}/k!$, we have
 \begin{equation*}
 (k^r-z^r)\Gamma(-z)\rightarrow \frac{(-1)^k}{k!}rk^{r-1}\quad \hbox{as} \quad z\rightarrow k.
 \end{equation*}
 It follows that
\begin{equation*}
\prod_{j=1, j \neq k} \frac{j^r-k^r}{j^r+k^r} =
(-1)^{k+1}\frac{2k(k!)}{r}\prod_{j=1}^{2r-1}\Gamma(-k\omega_{2r}
^j)^{(-1)^{j+1}}
\end{equation*}
which leads to
\begin{eqnarray*}
a_{k,n}&\rightarrow& \frac{2}{k^r} \left\{
(-1)^{k+1}\frac{2k(k!)}{r}\prod_{j=1}^{2r-1}\Gamma(-k\omega_{2r}
^j)^{(-1)^{j+1}} \right\}^{-1}\\
&=&
(-1)^{k+1}\frac{r}{k^{r+1}k!}\prod_{j=1}^{2r-1}\Gamma(-k\omega_{2r}
^j)^{(-1)^{j}}
\end{eqnarray*}
as $n\rightarrow \infty$.
\end{example}

\noindent{\bf Acknowledgment:} The authors would like to thank Y. Hibino for
pointing out a mistake in a previous version of Theorem \ref{moving-average-representation}. We are grateful to the anonymous referee for a careful reading of the manuscript and for providing interesting comments which lead to an improvement of this paper.
The first author is indebted to l'Agence Nationale de la Recherche  for  the research grant ANR-09-Blan-0084-01.

\vspace{5mm} \noindent{\footnotesize $^{(1)}$ Department of
Statistics, University of Warwick, {\sc CV4 7AL, Coventry}. L.alili@warwick.ac.uk
\\
 $^{(2)}$Department of Mathematics,
National Taitung University, {\sc no. 369, Sec. 2, Shikang Rd,
Taitung, Taiwan}. ctwu@nttu.edu.tw
}

%%%%%%%%%%%%%%%%%%%%%%%%%%%%%%%%%%%%%%%%%%%%%%%%%%%%%%%%%%%%%%%%%%%
%%                                                               %%
%% You may add acknowledgments (optional).                       %%
%%                                                               %%
%%%%%%%%%%%%%%%%%%%%%%%%%%%%%%%%%%%%%%%%%%%%%%%%%%%%%%%%%%%%%%%%%%%

%%%%%%%%%%%%%%%%%%%%%%%%%%%%%%%%%%%%%%%%%%%%%%%%%%%%%%%%%%%%%%%%%%%
%%                                                               %%
%% You have reached the end of your document.                    %%
%%                                                               %%
%%%%%%%%%%%%%%%%%%%%%%%%%%%%%%%%%%%%%%%%%%%%%%%%%%%%%%%%%%%%%%%%%%%
\end{document}